\newtheorem{thm}{Theorem}
\newtheorem{lem}[thm]{Lemma}
\newtheorem{prop}[thm]{Proposition}
\newtheorem{cor}[thm]{Corollary}
\newcommand{\INT}{\operatorname{int}}
\newcommand{\AFF}{\operatorname{aff}}
\newcommand{\DIST}{\operatorname{dist}}
\newcommand{\VECT}[2]{\left({\begin{array}{c} #1 \\ #2 \end{array}}\right)}
\begin{document}

\title{Self-affine quadrangles}

\author{Christian Richter}
\author{Felix Zimmermann}

\address{Institute for Mathematics, Friedrich Schiller University, 07737 Jena, Germany}
\email{christian.richter@uni-jena.de}
\email{zimmermann.f@uni-jena.de}

\date{\today}

\begin{abstract}
A quadrangle in the Euclidean plane is called $n$-self-affine if it has a dissection into $n$ affine images of itself. All convex quadrangles are known to be $n$-self-affine for every $n \ge 5$. The only $2$-self-affine convex quadrangles are trapezoids. Here we characterize all $3$-self-affine convex quadrangles, obtaining 
$5$ one-parameter families and $13$ singular examples of affine types. This way we reduce the quest for all $n$-self-affine convex quadrangles to the open case $n=4$.

In addition, we show that there are $n$-self-affine non-convex quadrangles for all $n \ge 3$, but not for $n=2$.
\end{abstract}

\subjclass[2010]{52C20 (primary); 51N10 (secondary).}
\keywords{Convex quadrangle; non-convex quadrangle; parametrization of affine type; self-affine; glass-cut dissection.}

\maketitle


\section{Introduction}

A topological disc $D$ in the Euclidean plane $\mathbb R^2$ is called \emph{$n$-self-affine}, $n \in \{2,3,\ldots\}$, if it admits a dissection into $n$ images $\alpha_1(D),\ldots,\alpha_n(D)$ of itself under suitable affine transformations $\alpha_1,\ldots,\alpha_n$ of $\mathbb R^2$. Here a \emph{topological disc} $D$ is a homeomorphic image of a closed circular disc. It is \emph{dissected} into $\alpha_1(D),\ldots,\alpha_n(D)$ if $D=\alpha_1(D)\cup\ldots\cup\alpha_n(D)$ and the \emph{interiors} $\INT(\alpha_1(D)),\ldots,\INT(\alpha_n(D))$ are mutually disjoint. The disc $D$ is called \emph{self-affine} if it is $n$-self-affine for some $n \ge 2$.

The concept of self-affinity goes back to fractal geometry, where the affine transformations are supposed to be contractions, whereas more general compact sets are allowed instead of discs \cite[Section 9.4]{falconer1997}. It generalizes \emph{self-similarity}, where the transformations are restricted to similarities \cite[Section 9.2]{falconer1997}. Self-similar discs include such aesthetic examples as the golden bee \cite[Section 4.8]{scherer1987} that seems to go back to R.\ Ammann \cite[Section 10.4]{gruenbaum_shephard1987}, the snail of S.W.\ Golomb \cite[p.\ 406]{golomb1964} (apparently sometimes called elf boot) and the twindragon \cite[p.\ 66]{mandelbrot1982}, \cite[p.\ 30]{edgar1990}, all being non-convex.

We are mainly interested in the case of convex discs. All convex self-affine discs are necessarily polygons \cite[Proposition~1]{richter2012} and, even more restrictive, triangles, quadrangles or pentagons \cite[Satz 1]{hertel2000}, \cite[Corollary]{richter2012}. Clearly, every triangle is $n$-self-affine for every $n \ge 2$, as it can be dissected into $n$ triangles. In contrast with that, the situation for pentagons appears highly non-trivial. There exists an example of a self-affine convex pentagon, more precisely of a $27$-self-affine convex pentagon \cite[Proposition~4]{hertel_richter2010}. But pentagons close to the regular one are known to be not self-affine \cite[Proposition~3]{hertel_richter2010}, \cite[Theorem~1]{blechschmidt_richter2015}. 

Here we study self-affine quadrangles. So far it is known that the only $2$-self-affine convex quadrangles are the trapezoids \cite[Satz~3]{hertel2000}, \cite[Theorem~1(iii)]{richter2024+}. On the other hand, every convex quadrangle is $n$-self-affine for every $n \ge 5$ \cite[Theorem~2]{richter2024+}, where the case $n=5$ goes back to A.\ P\'or \cite[Proposition~1]{hertel_richter2010}. The main result of the present paper is the complete characterization of all $3$-self-affine convex quadrangles. The search for all $4$-self-affine convex quadrangles is an ongoing project. Moreover, we give first examples of self-affine non-convex quadrangles.

The paper is organized as follows. 

We have to classify all convex quadrangles up to affine equivalence, i.e., congruence under affine transformations. In Section~\ref{sec:parameters} we shall characterize all affine types by two real parameters. We consider this as the natural parametrization for the present paper, being aware that other parametrizations are used in other contexts, see \cite[Section~2]{hertel2000}, \cite[Section~2]{hertel_richter2010}, \cite[Section~4]{richter2024+}. 

Self-affine convex quadrangles are already completely characterized for the particular case that the underlying dissection is a so-called glass-cut dissection \cite{richter2024+}. This is explained and summarized in Section~\ref{sec:glass-cut}. In particular, the parametrization from \cite{richter2024+} is translated into the natural one of the present paper.

Section~\ref{sec:non-glass-cut} concerns the situation of non-glass-cut dissections. This is based on a computer algebraic approach from the thesis \cite{zimmermann2023}. Then Section~\ref{sec:union} brings the above solutions together and illustrates the overall result.

Finally, Section~\ref{sec:non-convex} gives first insights into self-affinity of non-convex quadrangles. It is shown that there exist $n$-self-affine non-convex quadrangles if and only if $n \ge 3$.


\section{Parametrizing affine types of convex quadrangles\label{sec:parameters}}

The following parametrization has been introduced independently in \cite[Section~4]{boerner2020} and \cite[Section~4]{zimmermann2023}.
We call it the \emph{natural parametrization} in the present paper.

Let $Q$ be a convex quadrangle with vertices $v_1,v_2,v_3,v_4$. We pick three consecutive vertices of $Q$, say $v_1,v_2,v_3$. Then there is a unique affine transformation $\alpha_{123}$ mapping them following their order onto $\alpha_{123}(v_1)=\VECT{0}{1}$, $\alpha_{123}(v_2)=\VECT{0}{0}$ and $\alpha_{123}(v_3)=\VECT{1}{0}$, see Figure~\ref{fig:Q[x,y]}. 
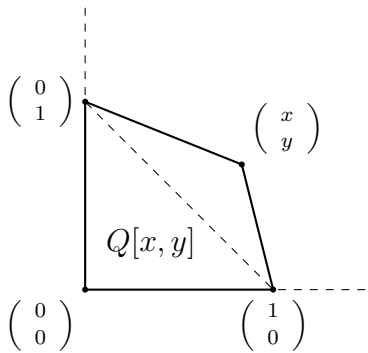
\begin{figure}
\begin{center}
\begin{tikzpicture}[xscale=2.5,yscale=2.5]

\draw[thick]
  (0,1)--(0,0)--(1,0)--(.833,.666)--cycle
	;

\draw[dashed]
  (0,1.5)--(0,1)--(1,0)--(1.5,0)
	;

\fill
  (0,1) circle (.016) node[left] {\footnotesize $\VECT{0}{1}$} 
  (0,0) circle (.016) node[below left] {\footnotesize $\VECT{0}{0}$} 
  (1,0) circle (.016) node[below] {\footnotesize $\VECT{1}{0}$} 
  (.833,.666) circle (.016) node[above right] {\footnotesize $\VECT{x}{y}$}
	(.35,.25) node {\Large $Q[x,y]$}
	;

\end{tikzpicture}
\end{center}
\caption{The affine type $Q[x,y]$.\label{fig:Q[x,y]}}
\end{figure}
The image $\alpha_{123}(v_4)=\VECT{x}{y}$ determines $Q$ up to affine congruence. We say that $Q$ is of \emph{affine type} $Q[x,y]$. Since $Q$ is convex, we have 
\begin{equation}\label{eq:full_domain}
x>0,\quad y>0, \quad x+y > 1,
\end{equation}
illustrated with dashed lines in Figure~\ref{fig:Q[x,y]}.
Conversely, every vector $\VECT{x}{y}$ with property \eqref{eq:full_domain} determines an affine type. We shall use the notation $Q[x,y]$ for the class of all quadrangles of the affine type $Q[x,y]$, but sometimes also for its particular representative from Figure~\ref{fig:Q[x,y]}.

The pair of parameters $(x,y)$ is not uniquely determined by $Q$, because there are eight possible choices of three consecutive vertices of $Q$, namely $v_i,v_j,v_k$ with $(i,j,k) \in \{(1,2,3),(2,3,4),(3,4,1),(4,1,2),(3,2,1),(4,3,2),(1,4,3),(2,1,4)\}$. For example, the map
\[
\varphi_{(x,y)}\VECT{\xi}{\eta}=\left(\begin{array}{cc} 
0 & \frac{1}{y} \\
-1 & \frac{x-1}{y}
\end{array}\right)\VECT{\xi}{\eta}+\VECT{0}{1}
\]
gives $\varphi_{(x,y)}\VECT{0}{0}=\VECT{0}{1}$, $\varphi_{(x,y)}\VECT{1}{0}=\VECT{0}{0}$ and $\varphi_{(x,y)}\VECT{x}{y}=\VECT{1}{0}$. Then the composition $\alpha_{234}=\varphi_{(x,y)} \circ \alpha_{123}$ yields $\alpha_{234}(v_2)=\VECT{0}{1}$, $\alpha_{234}(v_3)=\VECT{0}{0}$ and $\alpha_{234}(v_4)=\VECT{1}{0}$. This way we get a second parametrization $Q[x',y']$ of $Q$ where
\[
\VECT{x'}{y'}=\alpha_{234}(v_1)=\varphi_{(x,y)}\VECT{0}{1}=\VECT{\frac{1}{y}}{\frac{x+y-1}{y}}.
\]
Similarly, by $\alpha_{341}=\varphi_{(x',y')}\circ\alpha_{234}$ and $\alpha_{412}=\varphi_{(x'',y'')}\circ\alpha_{341}$, we get the parametrizations $Q[x'',y'']$ and $Q[x''',y''']$ with
\begin{equation}\label{eq:sub2}
\VECT{x''}{y''}=\alpha_{341}(v_2)=\VECT{\frac{y}{x+y-1}}{\frac{x}{x+y-1}} \;\text{ and }\; \VECT{x'''}{y'''}=\alpha_{412}(v_3)=\VECT{\frac{x+y-1}{x}}{\frac{1}{x}}.
\end{equation}
Finally, if $\psi\VECT{\xi}{\eta}=\VECT{\eta}{\xi}$, we get $\alpha_{kji}=\psi\circ\alpha_{ijk}$, this way obtaining the additional parametrizations $Q[\overline{x},\overline{y}]$, $Q[\overline{x}',\overline{y}']$, $Q[\overline{x}'',\overline{y}'']$ and  $Q[\overline{x}''',\overline{y}''']$ with
\begin{align}
\VECT{\overline{x}}{\overline{y}}&=\VECT{y}{x},&
\VECT{\overline{x}'}{\overline{y}'}&=\VECT{\frac{x+y-1}{y}}{\frac{1}{y}},\label{eq:sub3}\\
\VECT{\overline{x}''}{\overline{y}''}&=\VECT{\frac{x}{x+y-1}}{\frac{y}{x+y-1}},&
\VECT{\overline{x}'''}{\overline{y}'''}&=\VECT{\frac{1}{x}}{\frac{x+y-1}{x}}.\label{eq:sub4}
\end{align}

One can always pick a unique parametrization $Q[x,y]$ of $Q$ whose parameters $(x,y)$ are in the region
\[
\mathcal P=\left\{(x,y) \in \mathbb R^2: x+y >1,\, y \le 1,\, x \le y\right\},
\]
see Figure~\ref{fig:domains}. 
\begin{figure}
\begin{center}
\begin{tikzpicture}[xscale=2,yscale=2]

\draw[thick]
  (0,1)--(2.5,1)
	(1,0)--(1,2.5)
	(0,2)--(2,0)
	(.5,.5)--(2.5,2.5)
	;

\draw[dashed]
  (0,2.5)--(0,1)--(1,0)--(2.5,0)
	;

\fill
  (.833,.666) circle (.02)
	(1.5,.75) circle (.02)
	(1.333,1.666) circle (.02)
	(.6,1.2) circle (.02)
	(.666,.833) circle (.02)
	(.75,1.5) circle (.02)
	(1.666,1.333) circle (.02)
	(1.2,.6) circle (.02)
	;
	
\draw
  (0,2.4) node[right] {\footnotesize $x=0$}
  (1,2.4) node[right] {\footnotesize $x=1$}
  (2.5,2.4) node[right] {\footnotesize $y=x$}
  (2.5,0) node[right] {\footnotesize $y=0$}
  (2.5,1) node[right] {\footnotesize $y=1$}
	(.75,.3) node[below left] {\footnotesize $x+y=1$}
	(1.75,.2) node[above right] {\footnotesize $x+y=2$}
  (.833,.45) node {\Large $\overline{\mathcal P}$}
  (.45,.833) node {\Large ${\mathcal P}$}
  (2.2,.8) node {\Large $\overline{\mathcal P}'''$}
  (.8,2.2) node {\Large $\mathcal P'''$}
  (1.85,2.2) node {\Large $\overline{\mathcal P}''$}
  (2.2,1.85) node {\Large $\mathcal P''$}
  (.3,1.3) node {\Large $\overline{\mathcal P}'$}
  (1.3,.3) node {\Large $\mathcal P'$}
	;

\end{tikzpicture}
\end{center}
\caption{Canonical regions making the parametrization unique.
\label{fig:domains}}
\end{figure}
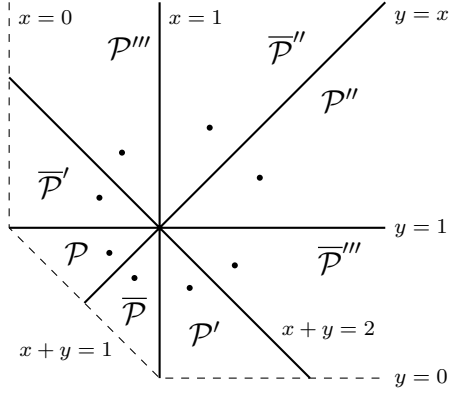
Indeed, the side $(0,1] \times \{1\}$ of $\mathcal P$ represents all trapezoids uniquely. If $Q$ is not a trapezoid and if $P$ is the only parallelogram that is spanned by two sides of $Q$ and contains the fourth vertex of $Q$ in its interior, a suitable affine map sends the common vertices of $Q$ and $P$ consecutively onto $\VECT{0}{1}$, $\VECT{0}{0}$ and $\VECT{1}{0}$, whence the fourth vertex is placed onto some $\VECT{x}{y}$ with $x,y < 1$. If not yet $x \le y$, then exchanging the positions of $\VECT{0}{1}$ and $\VECT{1}{0}$ forces finally $x \le y$. This gives parameters $(x,y)$ within $\mathcal P$. 

Given $(x,y) \in \mathcal P$, one can easily check that the above mentioned (up to) seven other pairs of parameters of $Q$ satisfy
\[
\begin{array}{rcccl}
	(x',y')& \in& \mathcal P'&=&\left\{(x,y) \in \mathbb R^2: y > 0,\, x \ge 1,\, x+y \le 2\right\},\\
	(x'',y'')& \in& \mathcal P''&=&\left\{(x,y) \in \mathbb R^2: y\ge 1,\, x \ge y\right\},\\
  (x''',y''')& \in& \mathcal P'''&=&\left\{(x,y) \in \mathbb R^2: x > 0,\, x \le 1,\, x+y \ge 2\right\},\\
	(\overline{x},\overline{y})& \in& \overline{\mathcal P}&=&\{(x,y) \in \mathbb R^2: x+y > 1,\, x \le 1,\, x \ge y\},\\
	(\overline{x}',\overline{y}')& \in& \overline{\mathcal P}'&=&\left\{(x,y) \in \mathbb R^2: x > 0,\, y \ge 1,\, x+y \le 2\right\},\\
	(\overline{x}'',\overline{y}'')& \in& \overline{\mathcal P}''&=&\left\{(x,y) \in \mathbb R^2: x\ge 1,\, x \le y\right\},\\
	(\overline{x}''',\overline{y}''')& \in& \overline{\mathcal P}'''&=&\left\{(x,y) \in \mathbb R^2: y > 0,\, y \le 1,\, x+y \ge 2\right\}.	
\end{array}
\]

Now every affine type of convex quadrangles has a unique parametrization within each of the eight regions $\mathcal P, \ldots, \overline{\mathcal P}'''$. The eight dots in Figure~\ref{fig:domains} represent the eight pairs of parameters based on $(x,y)=\left(\frac{2}{3},\frac{5}{6}\right)$. The common point $(1,1)$ of all regions gives the unique parametrization of all parallelograms. All other pairs of parameters on the lines $x=1$ and $y=1$ represent trapezoids, each having four parametrizations. All other pairs on the lines $y=x$ and $x+y=2$ represent affine images of kites, also each having four parametrizations. 


\section{$3$-self-affinity based on glass-cut dissections\label{sec:glass-cut}}

A \emph{glass-cut} of a topological disc $D$ splits it into two sub-discs by a cut along a line segment connecting two points of the boundary of $D$. This results in a first \emph{glass-cut dissection} of $D$. Given a glass-cut dissection of $D$, refining it by an additional glass-cut of one of its pieces gives a further glass-cut dissection. That is, a glass-cut dissection of $D$ into discs $D_1,\ldots,D_n$ is obtained by $n-1$ successive glass-cuts, see \cite{richter2024+} for some related references. 

For every $n \ge 2$, the paper \cite{richter2024+} characterizes all topological discs admitting an $n$-self-affinity based on a glass-cut dissection. The last condition is so restrictive, that all such discs are not only convex, but even always triangles or quadrangles \cite[Theorem~1(i)]{richter2024+}. 

If a $3$-self-affinity of a convex quadrangle $Q$ is based on a glass-cut dissection, its combinatorial structure (describing the mutual inclusions of the vertices and the relative open sides of $Q$ and of its three pieces) is one of the two \emph{combinatorial types} $\mathcal A$ or $\mathcal B$ from Figure~\ref{fig:combAB}, as follows from \cite[Lemma~10]{richter2024+}.
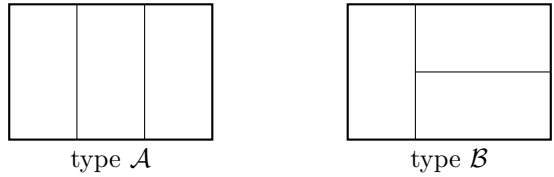
\begin{figure}
\begin{center}
\begin{tikzpicture}[xscale=.9,yscale=.9]

\draw[thick]
  (0,2)--(0,0)--(3,0)--(3,2)--cycle
	(5,2)--(5,0)--(8,0)--(8,2)--cycle
	;

\draw
  (1,2)--(1,0)
	(2,2)--(2,0)
	(6,2)--(6,0)
	(6,1)--(8,1)
	(1.5,0) node[below] {type $\mathcal A$}
	(6.5,0) node[below] {type $\mathcal B$}
	;

\end{tikzpicture}
\end{center}
\caption{The combinatorial types $\mathcal A$ and $\mathcal B$ of glass-cut dissections.\label{fig:combAB}}
\end{figure}

\begin{prop}\label{prop:trapezoidsAB}
Every trapezoid admits $3$-self-affinities under glass-cut dissections of combinatorial type $\mathcal A$ as well as of combinatorial type $\mathcal B$.
\end{prop}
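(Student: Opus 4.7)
The plan is to place the trapezoid in the canonical form $Q[x,1]$ with $x \in (0,1]$ and exhibit explicit dissections for both combinatorial types. By Section~\ref{sec:parameters}, every trapezoid is affinely equivalent to $Q[x,1]$ for some $x \in (0,1]$, with vertices $(0,1), (0,0), (1,0), (x,1)$. Since two trapezoids are affinely equivalent precisely when the ratios of their parallel sides agree, for each $x \in (0,1]$ it suffices to dissect $Q[x,1]$ into three trapezoids whose top-to-bottom ratio equals $x$.

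For type $\mathcal A$, I would use two cuts running from the top side to the bottom side. A glass-cut from $(u,1)$ to $(w,0)$ splits $Q[x,1]$ into two trapezoids with horizontal parallel sides, the left one having top-to-bottom ratio $u/w$; this equals $x$ precisely when $u=xw$. Hence, for any choice of $0<w_1<w_2<1$, the two cuts from $(xw_i,1)$ to $(w_i,0)$ produce a type $\mathcal A$ dissection into three trapezoids of ratio $x$.

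For type $\mathcal B$, the plan is to make a first cut from $(u,1)$ to $(w,0)$ and then a second cut in the resulting right piece $R$, going from the first cut to the slanted side from $(1,0)$ to $(x,1)$. A brief case analysis of which pairs of opposite sides of the two sub-pieces of $R$ can be parallel (adjacent parallel sides would force degeneracy) shows that the second cut must be horizontal. Writing down the ratio conditions for the left piece and for both sub-pieces of $R$ then yields a small system of equations whose unique non-degenerate solution is
\[
u = \frac{x}{1+x+x^2}, \quad w = \frac{x^2}{1+x+x^2}, \quad h = \frac{1}{1+x},
\]
where $h$ denotes the height of the horizontal second cut. A direct substitution then confirms that all three resulting pieces are trapezoids of ratio $x$.

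The main obstacle will be the type $\mathcal B$ construction. The natural attempt is to start with $u=xw$, which already makes both pieces of the first cut trapezoids of ratio $x$; but then any horizontal cut at height $h$ in $R$ gives an upper sub-piece of ratio $x/(1-h(1-x))$, which equals $x$ only in the degenerate case $h=0$ (or $x=1$). The resolution is to take $u=w/x$ instead, making $R$ a trapezoid of ratio $x^2$ rather than $x$; precisely the horizontal cut at height $1/(1+x)$ then partitions $R$ into two sub-trapezoids of ratio $x$.
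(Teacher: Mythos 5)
Your proof is correct and takes essentially the same route as the paper's: type $\mathcal A$ via two cuts partitioning the parallel sides proportionally, and type $\mathcal B$ via a first cut that makes the left piece a trapezoid of ratio $x$ and the right piece one of ratio $x^2$, followed by a horizontal cut splitting the latter into two ratio-$x$ trapezoids. Your values $u=\frac{x}{1+x+x^2}$, $w=\frac{x^2}{1+x+x^2}$, $h=\frac{1}{1+x}$ coincide exactly with the paper's choice $\zeta=\frac{z}{1+z+z^2}$ (top length $\zeta$, bottom length $z\zeta$, cut height $\frac{1}{1+z}$), so no further comparison is needed.
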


\begin{proof}
The affine type of a trapezoid $T$ is determined by the ratio $z \in (0,1]$ of the lengths of its parallel sides. A $3$-self-affinity of type $\mathcal A$ is obtained by two cuts connecting the parallel sides of $T$ and partitioning them proportionally, see the left-hand part of Figure~\ref{fig:trapezoidsAB}.
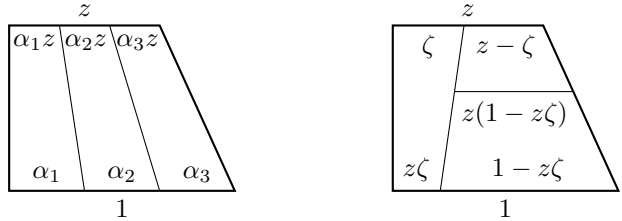
\begin{figure}
\begin{center}
\begin{tikzpicture}[xscale=3,yscale=2.2]

\draw[thick]
  (0,1)--(0,0)--(1,0)--(.6666,1)--cycle
  (-1.7,1)--(-1.7,0)--(-.7,0)--(-1.033,1)--cycle
	;

\draw
  (-1.477,1)--(-1.366,0)
	(-1.255,1)--(-1.033,0)
	(-1.2,0) node[below] {$1$}
	(-1.533,0) node[above] {$\alpha_1$}
	(-1.2,0) node[above] {$\alpha_2$}
	(-.866,0) node[above] {$\alpha_3$}
	(-1.366,1) node[above] {$z$}
	(-1.588,1) node[below] {$\alpha_1 z$}
	(-1.36,1) node[below] {$\alpha_2 z$}
	(-1.13,1) node[below] {$\alpha_3 z$}
	(.5,0) node[below] {$1$}
	(.3333,1) node[above] {$z$}
	(.158,1) node[below] {$\zeta$}
	(.5,1) node[below] {$z-\zeta$}
	(.105,0) node[above] {$z\zeta$}
	(.6,0) node[above] {$1-z\zeta$}
	(.54,.6) node[below] {$z(1-z\zeta)$}
  (.2105,0)--(.3157,1)
	(.2736,.6)--(.8,.6)
	;

\end{tikzpicture}
\end{center}
\caption{$3$-self-affinities of types $\mathcal A$ and $\mathcal B$ of a trapezoid.\label{fig:trapezoidsAB}}
\end{figure}

For type $\mathcal B$, define $\zeta=\frac{z}{1+z+z^2} \in (0,z)$.
The right-hand part of Figure~\ref{fig:trapezoidsAB} illustrates a dissection of $T$ where the lengths of the horizontal sides are indicated. Of course, the left-hand and the right-hand lower piece are of the same affine type as $T$. For verifying the same for the right-hand upper piece, one has to show that $z-\zeta=z(z(1-z\zeta))$. This is a simple consequence of the definition of $\zeta$.
\end{proof}

We come to non-trapezoidal convex quadrangles. In \cite[Section~4]{richter2024+} another pa\-rame\-tri\-za\-tion of the affine types of such quadrangles $Q$ is introduced, which has advantages when studying glass-cuts, see Figure~\ref{fig:gcparameter}.
\begin{figure}
\begin{center}
\begin{tikzpicture}[scale=.7]

\fill
  (0,0) circle (.06) node[left] {\footnotesize $v_2=\VECT{0}{0}$}
	(4,0) circle (.06) node[below] {\footnotesize $v_3=\VECT{1-\alpha}{0}$}
	(6,0) circle (.06) 
	(6.2,0) node[right] {\footnotesize $s=\VECT{1}{0}$}
  (3,1.5) circle (.06) 
	(3,1.5) node[above right] {\footnotesize $v_4=\VECT{1-\beta}{1-\frac{1-\beta}{1-\alpha}}=\VECT{1-\beta}{1-\bar{\beta}}$}
	(0,3) circle (.06) node[left] {\footnotesize $v_1=\VECT{0}{1-\frac{(1-\beta)\alpha}{(1-\alpha)\beta}}=\VECT{0}{1-\bar{\alpha}}$}
	(0,6) circle (.06) node[left] {\footnotesize $t=\VECT{0}{1}$}
	(1.6,.9) node {\Large $\begin{array}{l} Q(\alpha,\beta)\\
	= Q(\bar{\alpha},\bar{\beta})\end{array}$}
	;

\draw[thick]
  (0,0)--(4,0)--(3,1.5)--(0,3)--cycle
  ;
  
\draw
  (0,3)--(0,6)--(3,1.5)--(6,0)--(4,0)
  ;

\end{tikzpicture}
\end{center}
\caption{gc-parametrization of non-trapezoids.\label{fig:gcparameter}}
\end{figure}
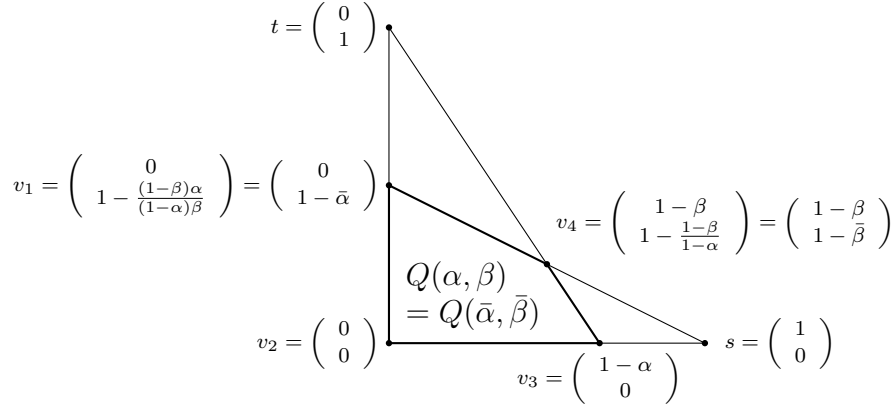
Let $Q$ have the vertices $v_1,v_2,v_3,v_4$. Prolonging opposite sides of $Q$ up to their intersection points erects two triangles over two sides of $Q$; say, we get $\triangle v_3 s v_4$ over the side $v_3v_4$ and $\triangle v_4tv_1$ over $v_4v_1$. Applying a suitable affine transformation, we can assume that $v_2=\VECT{0}{0}$, $s=\VECT{1}{0}$ and $t=\VECT{0}{1}$. Then the first coordinates $1-\alpha$ of $v_3$ and $1-\beta$ of $v_4$ define the parametrization $Q(\alpha,\beta)$ of the affine type of $Q$. We call it the \emph{gc-parametrization}. Allowing all pairs $(\alpha,\beta)$ with $0 < \alpha < \beta < 1$, we obtain gc-parametrizations for all non-trapezoidal convex quadrangles. 
As the roles of the erected triangles (or of the points $s$ and $t$, respectively) are exchangeable, the parameters are not unique in general. We obtain 
\[
Q(\alpha,\beta)=Q(\bar{\alpha},\bar{\beta}),
\quad\text{where}\quad (\bar{\alpha},\bar{\beta})=\left(\frac{(1-\beta)\alpha}{(1-\alpha)\beta},\frac{1-\beta}{1-\alpha}\right).
\]
The translation into our natural parametrization is as follows.

\begin{lem}\label{lem:translation}
\begin{itemize}
\item[(i)]
Let $0 < \alpha < \beta < 1$. Then 
$Q(\alpha,\beta)=Q[x,y]$ where
\begin{equation}\label{eq:translation}
(x,y)=\left(\frac{1-\beta}{1-\alpha},\beta\right), \quad\text{i.e.}\quad (\alpha,\beta)=\left(\frac{x+y-1}{x},y\right).
\end{equation}
Under the substitution \eqref{eq:translation}, the restrictions $0 < \alpha < \beta < 1$ on $(\alpha,\beta)$ are equivalent to $x+y > 1$, $x < 1$ and $y<1$ on $(x,y)$ (whence in particular $(x,y) \in \mathcal P \cup \overline{\mathcal P}$).
\item[(ii)]
Moreover, under the suppositions of (i), $\beta>\frac{1}{2-\alpha}$
if and only if $x <y$ (whence in particular $(x,y) \in \mathcal P$).
\end{itemize}
\end{lem}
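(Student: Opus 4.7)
The plan is to extract $(x,y)$ directly from Figure~\ref{fig:gcparameter} by observing that, in the gc-coordinates of $Q(\alpha,\beta)$, the three vertices $v_1,v_2,v_3$ already sit on the two coordinate axes in the right order, and then verify the two claimed equivalences by elementary algebra.

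For part (i), I would first note that the affine map $\alpha_{123}$ sending $(v_1,v_2,v_3)$ to $\left(\VECT{0}{1},\VECT{0}{0},\VECT{1}{0}\right)$ is simply the diagonal scaling $\VECT{X}{Y}\mapsto\VECT{X/(1-\alpha)}{Y/(1-\bar\alpha)}$, because $v_2=\VECT{0}{0}$, $v_3$ lies on the positive $x$-axis at distance $1-\alpha$, and $v_1$ lies on the positive $y$-axis at distance $1-\bar\alpha=\frac{\beta-\alpha}{(1-\alpha)\beta}$. Applying this map to $v_4=\VECT{1-\beta}{1-\bar\beta}$ with $1-\bar\beta=\frac{\beta-\alpha}{1-\alpha}$ immediately yields $(x,y)=\left(\frac{1-\beta}{1-\alpha},\beta\right)$, and inverting gives $(\alpha,\beta)=\left(\frac{x+y-1}{x},y\right)$.

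For the equivalence of constraints in part (i), I would argue in both directions. Assuming $0<\alpha<\beta<1$, the bounds $y=\beta\in(0,1)$ and $x=\frac{1-\beta}{1-\alpha}\in(0,1)$ are immediate from $0<1-\beta<1-\alpha<1$, while $x+y>1$ reduces, after clearing the positive denominator $1-\alpha$, to $\alpha(1-\beta)>0$. Conversely, starting from $x+y>1$, $x<1$, $y<1$ (together with $x,y>0$ coming from \eqref{eq:full_domain}), I would read off $\beta=y\in(0,1)$ and $\alpha=1-\frac{1-y}{x}\in(0,1)$, while the inequality $\alpha<\beta$ rewrites as $x+y-1<xy$, i.e.\ $(1-x)(1-y)>0$, which holds by hypothesis.

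For part (ii), I would substitute $(\alpha,\beta)=\left(\frac{x+y-1}{x},y\right)$ into $\beta>\frac{1}{2-\alpha}$, i.e.\ $\beta(2-\alpha)>1$, and multiply through by the positive quantity $x$. After cancellation, the resulting inequality collapses to $(x-y)(y-1)>0$. Since part (i) guarantees $y<1$, this is equivalent to $x<y$, proving the claim. There is no real obstacle in the argument: it is a straightforward coordinate translation followed by sign bookkeeping. The only point requiring care is correctly identifying $1-\bar\alpha$ and $1-\bar\beta$ from Figure~\ref{fig:gcparameter}, so that the scaling $\alpha_{123}$ is set up correctly.
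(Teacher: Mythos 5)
Your proof is correct and follows essentially the same route as the paper: your diagonal scaling $\VECT{X}{Y}\mapsto\VECT{X/(1-\alpha)}{Y/(1-\bar\alpha)}$ is exactly the paper's map $\varphi$, since $1-\bar\alpha=\frac{\beta-\alpha}{(1-\alpha)\beta}$. You merely spell out the sign bookkeeping (the identities $x+y-1=\frac{(1-\beta)\alpha}{1-\alpha}$, $xy-x-y+1=(1-x)(1-y)$ and $(x-y)(y-1)>0$) that the paper dismisses as ``easily checked.''
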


\begin{proof}
(i): Applying the affine map $\varphi\VECT{\xi}{\eta}= \left(\begin{array}{cc}
\frac{1}{1-\alpha} & 0\\
0 & \frac{(1-\alpha)\beta}{\beta-\alpha}
\end{array}\right)\VECT{\xi}{\eta}$ to the representative of $Q(\alpha,\beta)$ from Figure~\ref{fig:gcparameter}, we obtain
\[
\varphi(v_1)=\VECT{0}{1}, \quad \varphi(v_2)=\VECT{0}{0}, \quad \varphi(v_3)=\VECT{1}{0}, \quad \varphi(v_4)=\VECT{\frac{1-\beta}{1-\alpha}}{\beta}.
\]
Hence we got a canonical representative of the natural parametrization as in Figure~\ref{fig:Q[x,y]}, whence we obtain \eqref{eq:translation}. Moreover, one easily checks that the inequalities $0 < \alpha < \beta < 1$ are equivalent to $x+y > 1$, $x< 1$ and $y<1$.

(ii): Under the conditions $x+y>1$, $x<1$ and $y<1$, the inequality $\beta > \frac{1}{2-\alpha}$ is equivalent to $x < y$, as can easily be calculated. 
\end{proof}

The characterization of all convex quadrangles admitting $3$-self-affinities under glass-cut dissections is given in terms of the gc-parametrization in \cite[Theorem~18]{richter2024+}. Here we cite only the cases of non-trapezoids. The attribution to the combinatorial types from Figure~\ref{fig:combAB} is not explicitly given in Theorem~18 from \cite{richter2024+}, but can easily be seen from the proof given there.

\begin{prop}[{cf. \cite[Theorem~18]{richter2024+}, see also \cite[Satz~4]{hertel2000} for the first family and \cite[Sections 5 and 6]{richter2010} for the second and the third family}]\,

\begin{itemize}
\item[(i)]
A non-trapezoidal convex quadrangle admits a $3$-self-affinity based on a glass-cut dissection of combinatorial type $\mathcal A$ if and only if it is of an affine type $Q(\alpha,\beta)$ with $(\alpha,\beta)$ from
\[
\mathbf A^\mathrm{gc}=\left\{(\alpha,\beta):0 < \alpha < \beta < 1,\,\beta=\frac{-1+\sqrt{1+4\alpha-4\alpha^2}}{2\alpha(1-\alpha)}\right\}.
\]
\item[(ii)]
A non-trapezoidal convex quadrangle admits a $3$-self-affinity based on a glass-cut dissection of combinatorial type $\mathcal B$ if and only if it is of an affine type $Q(\alpha,\beta)$ with $(\alpha,\beta)$ from one of the two families
\[
\mathbf B^\mathrm{gc}_1=\left\{(\alpha,\beta): 0 < \alpha < \beta < 1,\, \beta=\frac{1-3\alpha+\alpha^2+\sqrt{1-2\alpha+7\alpha^2-6\alpha^3+\alpha^4}}{2(1-\alpha)}\right\}
\]
or
\begin{align*}
\mathbf B^\mathrm{gc}_2=\Big\{&(\alpha,\beta): 
0 < \alpha < \beta < 1,\\
&\left(\alpha-\alpha^2\right)\beta^3+\left(1-2\alpha+2\alpha^2\right)\beta^2+\left(-1+2\alpha-4\alpha^2+\alpha^3\right)\beta+\alpha^2=0 \Big\}.
\end{align*}
\end{itemize}
\end{prop}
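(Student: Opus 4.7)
The statement is essentially a restatement of \cite[Theorem~18]{richter2024+} in the non-trapezoidal case, so the plan is to reduce directly to that result and then explain the assignment of the three families to the combinatorial types $\mathcal A$ and $\mathcal B$. I would begin by recalling that by \cite[Lemma~10]{richter2024+} every $3$-self-affine glass-cut dissection of a convex quadrangle has combinatorial type $\mathcal A$ or $\mathcal B$ as in Figure~\ref{fig:combAB}. Given a non-trapezoidal convex quadrangle $Q=Q(\alpha,\beta)$, Theorem~18 of \cite{richter2024+} lists the defining algebraic conditions under which such a dissection exists, and the sets $\mathbf A^{\mathrm{gc}}$, $\mathbf B^{\mathrm{gc}}_1$, $\mathbf B^{\mathrm{gc}}_2$ appearing above are precisely those conditions.

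For the attribution to the two combinatorial types, I would trace the proof of \cite[Theorem~18]{richter2024+} case by case. In the type~$\mathcal A$ situation, one sets up the two parallel cuts between a fixed pair of opposite sides of $Q$, parameterizes their positions, and writes down the requirement that each of the three resulting pieces is of affine type $Q(\alpha,\beta)$. Eliminating the cut parameters produces a single polynomial relation between $\alpha$ and $\beta$; solving for $\beta$ then yields precisely the family $\mathbf A^{\mathrm{gc}}$. Since type~$\mathcal A$ admits essentially one configuration up to the available gc-symmetry $Q(\alpha,\beta)=Q(\bar\alpha,\bar\beta)$, only one family arises.

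For type~$\mathcal B$, the dissection is given by a first cut splitting $Q$ into two pieces and a second cut splitting one of them. Writing down the affine-equivalence conditions for each of the three resulting sub-quadrangles gives a polynomial system whose solution set decomposes into two branches, corresponding to the two essentially distinct ways of distributing the roles of the three pieces (equivalently, of labelling which sub-quadrangle is mapped onto $Q$ by which of the eight admissible vertex orderings). One branch leads to the explicit radical formula defining $\mathbf B^{\mathrm{gc}}_1$, and the other to the cubic equation in $\beta$ defining $\mathbf B^{\mathrm{gc}}_2$.

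The main obstacle in a fully self-contained proof would be the combinatorial bookkeeping for type~$\mathcal B$: one has to enumerate, modulo the symmetries of the gc-parametrization, all possible matchings of the vertices of each sub-quadrangle with the vertices of $Q(\alpha,\beta)$, and then carry out the resulting elimination. Since \cite[Theorem~18]{richter2024+} has already performed this analysis, I would simply invoke that theorem for the parameter sets, and verify by inspection of its proof that the single family in case~$\mathcal A$ indeed comes from the type-$\mathcal A$ configuration while both families in case~$\mathcal B$ come from the type-$\mathcal B$ configuration, as claimed.
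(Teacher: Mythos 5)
Your proposal matches the paper's treatment: the paper gives no independent proof of this Proposition but cites it directly from \cite[Theorem~18]{richter2024+}, remarking only that the attribution of the families to the combinatorial types $\mathcal A$ and $\mathcal B$ "can easily be seen from the proof given there." Your reduction to that theorem, together with the case-by-case inspection of its proof to assign the families to the two types, is exactly the same route.
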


We translate the last result into the natural parametrization, picking unique parameters from the region $\mathcal P=\{(x,y): x+y >1,\, y \le 1,\, x \le y\}$.

\begin{thm}\label{thm:AB}
\begin{itemize}
\item[(i)]
A non-trapezoidal convex quadrangle admits a $3$-self-affinity based on a glass-cut dissection of combinatorial type $\mathcal A$ if and only if it is of an affine type $Q[x,y]$ with $(x,y)$ from
\[
\mathbf A=\left\{(x,y): x+y > 1,\, y<1,\, x< y,\, y^3+xy^2-x^2-y^2=0\right\}.
\]
\item[(ii)]
A non-trapezoidal convex quadrangle admits a $3$-self-affinity based on a glass-cut dissection of combinatorial type $\mathcal B$ if and only if it is of an affine type $Q[x,y]$ with $(x,y)$ from one of the two families
\[
\mathbf B_1=\left\{(x,y): x+y > 1,\, y<1,\, x<y,\,(x+1)y^2-(x+1)y+x(1-x)=0\right\}
\]
or
\begin{align*}
\mathbf B_2=\big\{(x,y):\,& x+y > 1,\, y<1,\, x<y,\,
\\
& x^{3}+\left(-y^{2}+y-2\right) x^{2}+\left(-y^{3}+2 y^{2}-y+1\right) x+y^{2}-y=0\big\}.
\end{align*}
\end{itemize}
\end{thm}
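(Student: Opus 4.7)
The plan is to translate the previous proposition directly into the natural parametrization via Lemma~\ref{lem:translation}. Applying the substitution $\alpha=\frac{x+y-1}{x}$, $\beta=y$ from \eqref{eq:translation}, the constraint $0<\alpha<\beta<1$ becomes $x+y>1$, $x<1$, $y<1$; each non-trapezoidal affine type is represented by exactly one pair $(x,y)$ in $\mathcal{P}\cup\overline{\mathcal{P}}$, with the canonical representative in $\mathcal{P}$ characterized by $x\le y$. By Lemma~\ref{lem:translation}(ii) this corresponds to picking the gc-parametrization with $\beta\ge\frac{1}{2-\alpha}$, i.e., $(\alpha,\beta)$ rather than $(\bar{\alpha},\bar{\beta})$ whenever these differ. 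Because $\mathbf{A}^{\mathrm{gc}}$, $\mathbf{B}_1^{\mathrm{gc}}$, $\mathbf{B}_2^{\mathrm{gc}}$ each characterize affine types, they are invariant under the involution $(\alpha,\beta)\mapsto(\bar{\alpha},\bar{\beta})$, so it suffices to carry the algebra through for a single representative and read off the resulting equation.

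For part~(i), first rationalize $\mathbf{A}^{\mathrm{gc}}$: isolating the square root and squaring yields $\alpha(1-\alpha)\beta^2+\beta-1=0$, and the condition $\beta>0$ rules out the extraneous (negative) root. Substituting and multiplying by $x^2$ produces $(1-y)\bigl[(x+y-1)y^2-x^2\bigr]=0$; since $y<1$, the factor $1-y$ is nonzero and one is left with $y^3+xy^2-x^2-y^2=0$. For part~(ii) with family $\mathbf{B}_1^{\mathrm{gc}}$ the same method leads to the rationalized polynomial $(1-\alpha)\beta^2-(1-3\alpha+\alpha^2)\beta-\alpha=0$; after substitution and clearing denominators by $x^2$, the factor $1-y$ splits off once again, leaving $(x+1)y^2-(x+1)y+x(1-x)=0$. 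For $\mathbf{B}_2^{\mathrm{gc}}$ the defining equation is already polynomial, so one substitutes directly, multiplies through by $x^3$, and simplifies to the cubic in the statement.

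Finally, the exclusion of the kite locus $x=y$ in the three sets needs justification. Setting $x=y$ in each polynomial yields $2y^2(y-1)$, $y^2(y-1)$, and $-2y^2(y-1)^2$ respectively, none of which vanishes for $0<y<1$; hence no kite admits such a dissection and the strict inequality $x<y$ loses nothing. The main obstacle is purely computational: verifying the $\mathbf{B}_2$ case requires expanding a cubic in $\beta$ with quadratic coefficients in $\alpha$, substituting, multiplying by $x^3$, and matching the result against the target cubic -- an error-prone but conceptually routine calculation best handled with computer algebra.
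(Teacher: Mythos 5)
Your translation of the defining equations into the natural parametrization is correct and matches the paper's computation, but your argument for replacing the constraint $x<1$ (which is what Lemma~\ref{lem:translation}(i) actually delivers) by $x<y$ has a genuine gap. You assert that $\mathbf A^{\mathrm{gc}}$, $\mathbf B_1^{\mathrm{gc}}$, $\mathbf B_2^{\mathrm{gc}}$ are invariant under the involution $(\alpha,\beta)\mapsto(\bar\alpha,\bar\beta)$ ``because they characterize affine types''. That is false: the cited proposition only says that a quadrangle admits the dissection if and only if \emph{some} gc-parametrization of it lies in the given set, so the explicit curves need not be involution-invariant --- and in fact they are not. The involution corresponds to $(x,y)\mapsto(y,x)$ in the natural parametrization, and, for instance, the translated equation $y^3+xy^2-x^2-y^2=0$ is equivalent to $(x+y-1)y^2=x^2$, which under $x+y>1$, $x<1$, $y<1$ forces $y^2>x^2$ and hence $x<y$; the curve therefore lies strictly on one side of the diagonal and cannot be symmetric. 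Consequently, checking only the kite locus $x=y$ does not suffice: you must exclude solutions with $x>y$ throughout the strip, since otherwise the ``only if'' direction could miss quadrangles whose canonical $\mathcal P$-representative fails the stated equation.

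What is actually needed --- and what the paper proves --- is that \emph{every} solution of each translated equation subject to $x+y>1$, $x<1$, $y<1$ automatically satisfies $x<y$. For $\mathbf A$ this is the one-line argument above. For $\mathbf B_1$ one adds $xy(1-y)$ to the equation and divides by $1-x-y$ to obtain $x-y=-\frac{xy(1-y)}{x+y-1}<0$. For $\mathbf B_2$ the verification is genuinely harder and is the part your proposal omits entirely: the paper invokes Lemma~\ref{lem:translation}(ii) and shows $\beta>\frac{1}{2-\alpha}$ for every $(\alpha,\beta)\in\mathbf B_2^{\mathrm{gc}}$ by evaluating the cubic $p_\alpha$ at $\alpha$, at $\frac{1}{2-\alpha}$ and at $1$ (signs $-$, $-$, $+$) and using its convexity on $[\alpha,1]$ to locate its unique zero in $\left(\frac{1}{2-\alpha},1\right)$. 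Without these three verifications the theorem as stated is not established, even though its conclusion is correct.
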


\begin{proof}
We know from Lemma~\ref{lem:translation}(i) that the translation of parametrizations is based on the substitutions $\alpha=\frac{x+y-1}{x}$ and $\beta=y$ and that the conditions $0 < \alpha < \beta < 1$ are equivalent to $x+y > 1$, $x<1$ and $y<1$, so that we obtain $(x,y) \in \mathcal P \cup \overline{\mathcal P}$. We have to show that, having supposed that the last conditions are satisfied, the characterizing equations concerning $(\alpha,\beta)$ from $\mathbf A^\mathrm{gc}$, $\mathbf B_1^\mathrm{gc}$ and $\mathbf B_2^\mathrm{gc}$ are equivalent to those on $(x,y)$ in $\mathbf A$, $\mathbf B_1$ and $\mathbf B_2$, respectively. Moreover, one needs to show that all solutions $(x,y)$ of these equations subject to $x+y > 1$, $x<1$ and $y<1$ satisfy $x < y$.

The family $\mathbf A^\mathrm{gc}$: We multiply the equation $\beta=\frac{-1+\sqrt{1+4\alpha-4\alpha^2}}{2\alpha(1-\alpha)}$ by the positive factor $2\alpha(1-\alpha)$, add $1$, observe that the left-hand side $2\alpha(1-\alpha)\beta+1$ is positive, square the equation, subtract $1+4\alpha-4\alpha^2$ and divide by the positive term $4\alpha(1-\alpha)$, this way arriving at
\[
\alpha(1-\alpha)\beta^2+\beta-1=0.
\]
Next we substitute $\alpha=\frac{x+y-1}{x}$ and $\beta=y$, multiply by the positive factor $x^2$ and divide by the positive term $1-y$, which yields
\begin{equation}\label{eq:eqA}
y^3+xy^2-x^2-y^2=0.
\end{equation}
So the parameters $(\alpha,\beta)$ from $\mathbf A^\mathrm{gc}$ correspond to the parameters $(x,y)$ from
\[
\mathbf A=\left\{(x,y):x+y>1,\,x<1,\, y<1,\,y^3+xy^2-x^2-y^2=0\right\}.
\]

To see that every $(x,y) \in \mathbf A$ satisfies $x < y$, note that \eqref{eq:eqA} is equivalent to $(x+y-1)y^2=x^2$, whence $y^2 > x^2$, since $0< x+y-1 <1$, and finally $y>x$, as $x,y >0$.

The family $\mathbf B_1^\mathrm{gc}$: We multiply the equation $\beta=\frac{1-3\alpha+\alpha^2+\sqrt{1-2\alpha+7\alpha^2-6\alpha^3+\alpha^4}}{2(1-\alpha)}$ by the positive factor $2(1-\alpha)$ and subtract the right-hand side, which yields
\begin{equation}\label{eq:B1eq1}
2(1-\alpha)\beta+\left(-1+3\alpha-\alpha^2\right)-\sqrt{(-1+3\alpha-\alpha^2)^2+4\alpha(1-\alpha)}=0.
\end{equation}
Next note that the term
\[
2(1-\alpha)\beta+\left(\left(-1+3\alpha-\alpha^2\right)+\sqrt{(-1+3\alpha+\alpha^2)^2+4\alpha(1-\alpha)}\right)
\]
is positive. We multiply \eqref{eq:B1eq1} with this very term and divide by the negative term $-4(1-\alpha)$, obtaining
\[
-(1-\alpha)\beta^2+\left(1-3\alpha+\alpha^2\right)\beta+\alpha=0.
\]
Now we substitute $\alpha=\frac{x+y-1}{x}$ and $\beta=y$, multiply by the positive factor $x^2$, divide by the negative term $y-1$ and arrive at
\begin{equation}\label{eq:B1eq2}
(x+1)y^2-(x+1)y+x(1-x)=0.
\end{equation}
The parameters $(\alpha,\beta)$ from $\mathbf B_1^\mathrm{gc}$ are translated into $(x,y)$ from
\[
\mathbf B_1=\left\{(x,y):x+y>1,\,x<1,\, y<1,\,(x+1)y^2-(x+1)y+x(1-x)=0\right\}.
\]

Finally, to see that $x < y$ for all $(x,y) \in \mathbf B_1$, we add $xy(1-y)$ to \eqref{eq:B1eq2} and divide by the negative term $1-x-y$, which gives
\[
x-y=-\frac{xy(1-y)}{x+y-1}<0.
\]

The family $\mathbf B_2^\mathrm{gc}$: Now we have
\[
\left(\alpha-\alpha^2\right)\beta^3+\left(1-2\alpha+2\alpha^2\right)\beta^2+\left(-1+2\alpha-4\alpha^2+\alpha^3\right)\beta+\alpha^2=0.
\]
Substituting $\alpha=\frac{x+y-1}{x}$ and $\beta=y$ and multiplying with the positive factor $\frac{x^3}{(1-y)^2}$ we get
\begin{align*}
\mathbf B_2=\big\{(x,y):\,&x+y>1,\,x<1,\, y<1,\,\\
&x^{3}+\left(-y^{2}+y-2\right) x^{2}+\left(-y^{3}+2 y^{2}-y+1\right) x+y^{2}-y=0\big\}.
\end{align*}

For showing that every $(x,y) \in \mathbf B_2$ satisfies $x<y$, we apply Lemma~\ref{lem:translation}(ii). That is, we have to show that $\beta>\frac{1}{2-\alpha}$ for every fixed $(\alpha,\beta) \in \mathbf B_2^\mathrm{gc}$. As $(\alpha,\beta) \in \mathbf B_2^\mathrm{gc}$, we have $0 < \alpha <\beta < 1$ and $\beta$ is a zero of the polynomial 
\[
p_\alpha(\xi)=\left(\alpha-\alpha^2\right)\xi^3+\left(1-2\alpha+2\alpha^2\right)\xi^2+\left(-1+2\alpha-4\alpha^2+\alpha^3\right)\xi+\alpha^2.
\]
The restriction $0 < \alpha < 1$ yields 
\[
\alpha < \frac{1}{2-\alpha} < 1,
\]
and we have
\[
p_\alpha(\alpha)=-\alpha(1-\alpha)^4<0, \; p_\alpha\left(\frac{1}{2-\alpha}\right)=-\frac{2(1-\alpha)^4}{(2-\alpha)^3}<0, \; p_\alpha(1)=\alpha(1-\alpha)^2>0.
\]
Moreover, the polynomial $p_\alpha$ is convex on $[\alpha,1]$, since
\[
p_\alpha''(\xi)=6\alpha(1-\alpha)\xi+2(1-\alpha)^2+2\alpha^2 >0
\]
for $\xi \ge 0$. Thus the only zero $\beta$ of $p_\alpha$ in $(\alpha,1)$ is in $\left(\frac{1}{2-\alpha},1\right)$, whence $\beta > \frac{1}{2-\alpha}$.
\end{proof}


\section{$3$-self-affinity based on non-glass-cut dissections\label{sec:non-glass-cut}}

If a convex quadrangle $Q$ is dissected into three convex quadrangles $Q_1$, $Q_2$ and $Q_3$, but not based on a glass-cut dissection, every dissecting line segment starting from the boundary of $Q$ must end in the interior of $Q$. Then, by convexity, this point is the end of three dissecting line segments and is a common point of $Q_1$, $Q_2$ and $Q_3$. Hence the combinatorics of the dissection is necessarily as in the left-hand part of Figure~\ref{fig:combC}. We call it \emph{combinatorial type $\mathcal C$}.
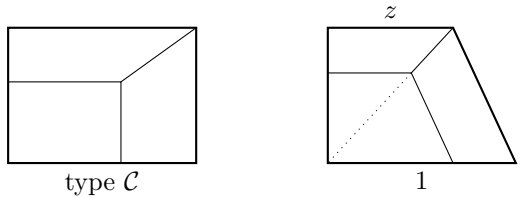
\begin{figure}
\begin{center}
\begin{tikzpicture}[xscale=2.5,yscale=1.8]

\draw[thick]
  (0,1)--(0,0)--(1,0)--(.666,1)--cycle
  (-1.7,1)--(-1.7,0)--(-.7,0)--(-.7,1)--cycle
	;

\draw
  (-1.7,.6)--(-1.1,.6)--(-1.1,0)
	(-1.1,.6)--(-.7,1)
	(-1.2,0) node[below] {type $\mathcal C$}
	(0,.666)--(.444,.666)--(.666,0)
	(.444,.666)--(.666,1)
	(.5,0) node[below] {$1$}
	(.3333,1) node[above] {$z$}
	;

\draw[dotted]
  (.444,.666)--(0,0)
	;

\end{tikzpicture}
\end{center}
\caption{The last combinatorial type $\mathcal C$ and a corresponding $3$-self-affinity of a trapezoid.\label{fig:combC}}
\end{figure}

\begin{prop}\label{prop:trapezoidsC}
A trapezoid admits a $3$-self-affinity of combinatorial type $\mathcal C$ if and only if it is not a parallelogram.
\end{prop}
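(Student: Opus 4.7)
The strategy splits into an existence construction for trapezoids with $z<1$ and a nonexistence argument for parallelograms.

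For existence, I would follow the picture on the right of Figure~\ref{fig:combC}. Place the trapezoid in the canonical form $Q[z,1]$, with vertices $(0,1)$, $(0,0)$, $(1,0)$, $(z,1)$ for $z \in (0,1)$. Introduce the interior point $P=(z^{2},z)$ together with the boundary points $P_L=(0,z)$ on the left side and $P_B=(z,0)$ on the bottom side, and cut along the three segments $PP_L$, $PP_B$, and $P(z,1)$. Since $0<z<1$, the point $P$ lies in the interior and $P_L$, $P_B$ lie in the relative interiors of their sides, so this yields a valid type $\mathcal C$ dissection. A short verification then shows that each of the three pieces is itself a trapezoid with parallel-side ratio $z$: the lower-left and upper pieces have horizontal parallel sides of lengths $z^{2}$ and $z$, while the right piece has parallel sides $(1,0)(z,1)$ and $(z^{2},z)(z,0)$, again in ratio $z$. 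Hence each piece is affinely equivalent to $Q[z,1]$.

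For nonexistence, suppose the trapezoid is a parallelogram. Every affine image of a parallelogram is a parallelogram, so in a type $\mathcal C$ dissection all three pieces must themselves be parallelograms. By affine invariance I may take the parallelogram to be the unit square $[0,1]^{2}$, and by its dihedral symmetry I may assume the cut vertex is $(1,1)$ with the other two cuts landing at points $(0,q)$ and $(p,0)$, meeting at an interior point $P=(a,b)$. Forcing the corner piece $(0,0)$, $(p,0)$, $(a,b)$, $(0,q)$ to be a parallelogram immediately pins down $q=b$ and $p=a$. The upper piece then has vertices $(0,b)$, $(a,b)$, $(1,1)$, $(0,1)$, whose two horizontal opposite sides have lengths $a$ and $1$; the parallelogram condition forces $a=1$, contradicting $P$ being interior.

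The main thing to get right is the guess $P=(z^{2},z)$ for the interior meeting point; once written down, the verification reduces to a handful of slope and length computations. The nonexistence side is short because the parallelogram condition on each piece is very rigid. I would also note that the combinatorial discussion immediately preceding the proposition pins down the incidence pattern of type $\mathcal C$ (one cut ending at a vertex, the other two ending in the interiors of the two sides adjacent to the opposite vertex), so no additional case analysis over different placements of the three cuts is required.
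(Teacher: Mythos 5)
Your proof is correct. The existence half coincides with the paper's construction: your interior point $(z^2,z)$ and boundary points $(0,z)$ and $(z,0)$ reproduce exactly the dissection drawn in the right-hand part of Figure~\ref{fig:combC}; the paper describes the lower-left piece as the dilation of $T$ by factor $z$ about $(0,0)$ and then observes that the other two pieces also have parallel-side ratio $z$, while you verify all three ratios explicitly --- same dissection, same invariant (the ratio of the parallel sides determines the affine type of a trapezoid). The nonexistence half is where you genuinely diverge. The paper argues coordinate-free: each of the three segments emanating from the common interior vertex is a side of a parallelogram piece whose opposite side lies on a side of $P$, hence each segment is parallel to a side of $P$; since $P$ admits only two side directions, two of the three segments would share a direction, which is impossible for segments issuing from a common vertex of type $\mathcal C$. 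You instead normalize to the unit square, use the forced incidence pattern (one cut to a vertex, the other two into the relative interiors of the two sides adjacent to the opposite vertex) to set up coordinates, and derive the contradiction $a=1$ from the parallelogram conditions on two pieces. Both arguments are sound; the paper's pigeonhole-on-directions argument is shorter and needs no normalization, whereas yours is more explicit but leans on the combinatorial reduction, which --- as you note --- is indeed established in the discussion preceding the proposition.
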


\begin{proof}
If a parallelogram $P$ had a $3$-self-affinity of type $\mathcal C$, then each of the three sides of pieces emanating from their common vertex in the interior of $P$ had to be parallel to one of the sides of $P$. Hence two of these sides had the same direction, contradicting the condition of type $\mathcal C$.

The right-hand part of Figure~\ref{fig:combC} illustrates a $3$-self-affinity of a trapezoid $T$ whose quotient of the lengths of its parallel sides is $z \in (0,1)$. The left-hand lower piece is obtained by dilating $T$ with factor $z$ and with respect to its left-hand lower vertex. Then it is clear that the respective ratios of the lengths of the parallel sides of the two other pieces are $z$ as well.
\end{proof}

\begin{thm}\label{thm:C}
A non-trapezoidal convex quadrangle admits a $3$-self-affinity of combinatorial type $\mathcal C$ if and only if it is of an affine type $Q[x,y]$ with $(x,y)$ either from the family
\[
\mathbf C=\left\{(x,x^2-x+1): 0 < x < 1\right\}
\]
or with $(x,y)$ being one of the $13$ pairs from Table~\ref{tab:1}. All above parameters are in the region $\mathcal P$.
\end{thm}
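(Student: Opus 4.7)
The plan is to work in the canonical representative of $Q[x,y]$ from Figure~\ref{fig:Q[x,y]} and impose that $Q$ admits a dissection of combinatorial type $\mathcal C$ into three affinely equivalent pieces. A $\mathcal C$-dissection has a single interior branch point $p$ from which three cut segments emanate to $\partial Q$, and a vertex count (the three sub-quadrangles contribute $3\cdot 3=9$ non-$p$ vertices, while the boundary supplies $4$ original vertices together with cut endpoints, each interior endpoint being shared by two pieces, giving $2k+(4-k)+2(3-k)=10-k$ with $k$ the number of cuts ending at a vertex of $Q$) forces $k=1$, so exactly one cut terminates at a vertex of $Q$ and the other two at relative interiors of two of the remaining three sides. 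Together with the eight parametrizations of a fixed affine type from Section~\ref{sec:parameters} and the choice, for each sub-quadrangle $Q_i$, of one of up to eight cyclic labellings realising the affine map $\alpha_i\colon Q\to Q_i$, this reduces the analysis to a finite (but sizeable) list of configurations.

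For each configuration I would introduce unknowns $(x,y,a,b,s,t)$, where $(a,b)=p$ and $s,t$ locate the two interior cut endpoints on their respective sides. For each $i=1,2,3$, three of the four labelled vertex correspondences determine $\alpha_i$ uniquely, and demanding that the fourth correspondence also hold yields two polynomial identities per piece, for a total of six equations in six unknowns. Adjoining the strict inequalities $0<s,t<1$, $p\in\INT Q$, $x+y>1$, and $x,y<1$ (the latter two to exclude trapezoids, already treated by Proposition~\ref{prop:trapezoidsC}), I would hand the resulting semialgebraic system to a computer algebra system as in \cite{zimmermann2023}, using Gr\"obner basis elimination of $a,b,s,t$ to reduce each configuration to a system in $(x,y)$ alone.

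The main obstacle is the combinatorial multiplicity and the size of the resulting ideals: many configurations are infeasible or produce only non-admissible parameter values, and surviving solutions often coincide under the parametrization symmetries \eqref{eq:sub2}--\eqref{eq:sub4}, so the real work lies in the systematic pruning of this list and the reduction of each representative to the canonical region $\mathcal P$. Exactly one configuration is expected to yield a positive-dimensional solution curve, namely the family $\mathbf C=\{(x,x^2-x+1):0<x<1\}$, and the inclusion $\mathbf C\subset\mathcal P$ then follows from the equivalences $x^2-x+1<1\Leftrightarrow 0<x<1$ and $x^2-x+1\ge x\Leftrightarrow(x-1)^2\ge 0$; the remaining configurations each contribute at most finitely many admissible pairs $(x,y)$, which after symmetry reduction assemble into the thirteen singular entries listed in Table~\ref{tab:1}.
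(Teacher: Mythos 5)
Your proposal is correct and follows essentially the same route as the paper: fix the combinatorial configuration of type $\mathcal C$ (one cut to a vertex, two to relative interiors of the sides adjacent to the opposite vertex), enumerate the finitely many vertex-correspondence choices for the three pieces, solve the resulting polynomial systems by computer algebra with the convexity/interiority inequalities adjoined, and reduce the solutions to $\mathcal P$ via the symmetries \eqref{eq:sub2}--\eqref{eq:sub4}. The only substantive differences are cosmetic — you eliminate the affine-map entries up front (6 equations in 6 unknowns rather than the paper's 24-by-24 system) and you expect a single family-producing configuration, whereas the computation actually yields six labelling triples whose one-parameter families all coincide with $\mathbf C$ after symmetry reduction.
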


\begin{table}[t]
\begin{center}
\begin{tabular}{|c|c|}
\hline
no. & expression of parameter $(x,y) \in \mathcal P$, implicit\\
& (by a system of two equations) and explicit (first digits)\\
\hline\hline
 & $x^{3} y-x y+y^{2}-x-y+1=0,$ \\
1 & $x^{2} y^{2}+x^{3}-y^{2} x-2 x^{2}-x y-y^{2}+2 x+2 y-1=0,$ \\
\cline{2-2} 
& $(0.54368..., 0.83928...)$\\ 
\hline
& $x^{3} y+x^{2} y^{2}-x^{2} y-y^{2} x-x+y=0,$ \\
2 & $x^{3} y+x^{2} y^{2}-x^{2} y-2 y^{2} x-y^{3}-x^{2}+x y+2 y^{2}+x-y=0$ \\
\cline{2-2}& $(0.55706...,0.85490...)$\\
\hline
& $x^{3} y+x^{2} y^{2}-x^{2} y+y^{2} x-x^{2}-2 x y-y^{2}+x+y=0,$\\
3 & $x^{3} y+x^{2} y^{2}-x^{3}-4 x^{2} y-y^{2} x+2 x^{2}+3 x y+y^{2}-x-y=0$\\
\cline{2-2}& $(0.54660...,0.72669...)$\\
\hline
& $x^{2} y^{2}+y^{3} x+x^{2} y-y^{2} x-x^{2}-2 x y-y^{2}+x+y=0,$\\
4& $x^{4}+x^{3} y-3 x^{3}-2 x^{2} y-y^{2} x+2 x^{2}+3 x y+y^{2}-x-y=0$\\
\cline{2-2}& $(0.50678...,0.67567...)$\\
\hline
& $x^{3} y+2 x^{2} y^{2}+y^{3} x-2 x^{2} y-3 y^{2} x-y^{3}-x^{2}+2 x y+y^{2}=0,$\\
5& $x^{3} y+2 x^{2} y^{2}+y^{3} x-x^{3}-3 x^{2} y-3 y^{2} x-y^{3}+2 x^{2}+x y+y^{2}=0$\\
\cline{2-2}& $\left(\frac{9-4\sqrt{2}}{7},\frac{10+\sqrt{2}}{14}\right)=(0.47759...,0.81530...)$\\
\hline
& $x^{2} y^{2}+y^{3} x-y^{2} x-y^{3}-x+y=0,$\\
6& $x^{2} y^{2}+y^{3} x+x^{3}-x^{2} y-3 y^{2} x-y^{3}-x^{2}+x y+2 y^{2}+x-y=0$\\
\cline{2-2}& $(0.25805...,0.84781...)$\\
\hline
& $x^{3}+x^{2} y-x^{2}+y^{2}-2 x-2 y+2=0,$\\
7& $y^{2} x+y^{3}-x^{2}-3 x y-2 y^{2}+3 x+3 y-2=0$\\
\cline{2-2}& $(0.58750...,0.78257...)$\\
\hline
& $x^{2} y+y^{2} x-2 x^{2}-3 x y-y^{2}+3 x+3 y-2=0,$\\
8& $x^{2} y+y^{2} x-2 x-2 y+2=0$\\
\cline{2-2}& $\left(\frac{1}{2},\frac{7-\sqrt{17}}{4}\right)=(0.5,0.71922...)$\\
\hline
& $x^{2} y+2 y^{2} x+y^{3}-4 x y-4 y^{2}+x+3 y=0,$\\
9& $x^2 + xy - y=0$\\
\cline{2-2}& $(0.59100...,0.85403...)$\\
\hline
& $x^{2} y+y^{3}-2 x y-2 y^{2}+x+y=0,$\\
10& $x^{3}+x^{2} y+2 y^{2} x-2 x^{2}-3 x y-y^{2}+x+y=0$\\
\cline{2-2}& $(0.41803...,0.71831...)$\\
\hline
& $y^{3}x+x^{2} y-y^{3}-x y+y^{2}-x-y+1=0,$\\
11& $x^{3} y-x^{2} y^{2}- y^{3}x-2 x^{3}+2 y^{2} x+2 x^{2}+x y+y^{2}-2 x-2 y+1=0$\\
\cline{2-2}& $(0.33133...,0.78783...)$\\
\hline
& $2 y^{2} x-2 y x-2 y^{2}+x+y=0,$\\
12& $3 x^{2} y+y^{2} x-2 x^{2}-3 x y-y^{2}+x+y=0$\\
\cline{2-2}& $\left(\frac{2}{5},\frac{2}{3}\right)=(0.4,0.66666...)$\\
\hline
& $y^{4}+x^{2} y-y^{3}-x y+y^{2}-x-y+1=0,$\\
13& $x^{4}-x^{2} y^{2}-y^{3} x-2 x^{3}+2 y^{2} x+2 x^{2}+x y+y^{2}-2 x-2 y+1=0$\\
\cline{2-2}& $(0.59717...,0.87586...)$\\
\hline
\end{tabular}
\end{center}
\caption{All singular solutions $(x,y) \in \mathcal P$.\label{tab:1}}
\end{table}

\begin{proof}
We describe the question analytically. The quadrangle $Q=q_1q_2q_3q_4$ is dissected into $A=a_1a_2a_3a_4$, $B=b_1b_2b_3b_4$ and $C=c_1c_2c_3c_4$ as in Figure~\ref{fig:Canalytic}.
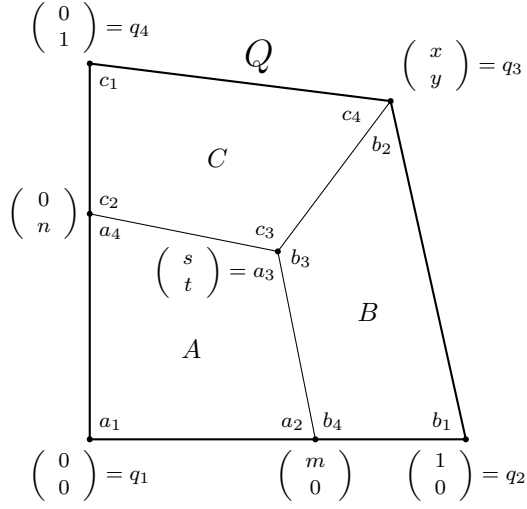
\begin{figure}
\begin{center}
\begin{tikzpicture}[xscale=5,yscale=5]

\draw[thick]
  (0,1)--(0,0)--(1,0)--(.8,.9)--cycle
	;
	
\draw
  (.6,0)--(.5,.5)--(0,.6)
	(.5,.5)--(.8,.9)
	;

\fill
  (0,0) circle (.008) node[below] {\footnotesize $\VECT{0}{0}=q_1$}
	(0,0) node[above right] {\footnotesize $a_1$}
	(.6,0) circle (.008) node[below] {\footnotesize $\VECT{m}{0}$}
	(.595,0) node[above left] {\footnotesize $a_2$}
	(.595,0) node[above right] {\footnotesize $b_4$}
  (1,0) circle (.008) node[below] {\footnotesize $\VECT{1}{0}=q_2$} 
	(.99,0) node[above left] {\footnotesize $b_1$}
  (.8,.9) circle (.008) node[above right] {\footnotesize $\VECT{x}{y}=q_3$}
	(.83,.83) node[below left] {\footnotesize $b_2$}
	(.75,.9) node[below left] {\footnotesize $c_4$}
  (0,1) circle (.008) node[above] {\footnotesize $\VECT{0}{1}=q_4$} 
  (0,.99) node[below right] {\footnotesize $c_1$} 
  (0,.6) circle (.008) node[left] {\footnotesize $\VECT{0}{n}$} 
  (0,.59) node[above right] {\footnotesize $c_2$} 
  (0,.59) node[below right] {\footnotesize $a_4$} 
  (.5,.5) circle (.008)
	(.52,.54) node[below left] {\footnotesize $\VECT{s}{t}=a_3$} 
	(.52,.51) node[above left] {\footnotesize $c_3$} 
	(.51,.53) node[below right] {\footnotesize $b_3$} 
	(.45,.95) node[above] {\LARGE $Q$}
	(.27,.24) node {$A$}
	(.74,.34) node {$B$}
	(.34,.75) node {$C$}
	;

\end{tikzpicture}
\end{center}
\caption{Analytic modelling of type $\mathcal C$.\label{fig:Canalytic}}
\end{figure}
Let
\[
q_1=a_1=\VECT{0}{0},\;q_2=b_1=\VECT{1}{0},\;q_3=b_2=c_4=\VECT{x}{y},\;q_4=c_1=\VECT{0}{1},
\]
\[
a_2=b_4=\VECT{m}{0},\;a_4=c_2=\VECT{0}{n},\;a_3=b_3=c_3=\VECT{s}{t}.
\]
Since $A$, $B$ and $C$ are affine images of $Q$, there exist permutations
\[
\pi_A,\pi_B,\pi_C \in \{(1234),(2341),(3412),(4123),(1432),(2143),(3214),(4321)\}
\]
($\pi=(ijkl)$ standing for $\pi(1)=i$, $\pi(2)=j$, $\pi(3)=k$ and $\pi(4)=l$)
and affine maps
\begin{align*}
\alpha(\cdot)&=
\left(\begin{array}{cc}
\alpha_{11} & \alpha_{12} \\
\alpha_{21} & \alpha_{22}
\end{array}\right)(\cdot)+\VECT{\alpha_{13}}{\alpha_{23}},\\
\beta(\cdot)&=
\left(\begin{array}{cc}
\beta_{11} & \beta_{12} \\
\beta_{21} & \beta_{22}
\end{array}\right)(\cdot)+\VECT{\beta_{13}}{\beta_{23}},\\
\gamma(\cdot)&=
\left(\begin{array}{cc}
\gamma_{11} & \gamma_{12} \\
\gamma_{21} & \gamma_{22}
\end{array}\right)(\cdot)+\VECT{\gamma_{13}}{\gamma_{23}}
\end{align*}
such that
\begin{equation}\label{eq:system}
\alpha(q_i)=a_{\pi_A(i)},\;
\beta(q_i)=b_{\pi_B(i)},\;
\gamma(q_i)=c_{\pi_C(i)} \quad\text{for}\quad i=1,2,3,4.
\end{equation}
Then \eqref{eq:system} is a system of $24$ algebraic equations in the $24$ real unknowns $\alpha_{jk}$, $\beta_{jk}$, $\gamma_{jk}$, $j=1,2$, $k=1,2,3$ and $x,y,s,t,m,n$.
As $Q$ must be a convex quadrangle and $\VECT{m}{0}$ and $\VECT{0}{n}$ have to be between $q_1$ and $q_2$ and between $q_1$ and $q_4$, respectively, we have the restrictions
\begin{equation}\label{eq:restrictions1}
x>0,\; y>0,\; x+y > 1,\; 0 < m < 1, \; 0 < n < 1.
\end{equation}
The system \eqref{eq:system} ensures in particular that $a_1$ and $a_3$ are on different sides of the segment $a_2a_4$, and the same applies to $b_1$ and $b_3$ as well as to $c_1$ and $c_3$ with respect to $b_2b_4$ and $c_2c_4$, respectively. Hence $\VECT{s}{t}$ is in the interior of $Q$ as soon as \eqref{eq:system} and \eqref{eq:restrictions1} are satisfied.

We see that, for every fixed choice of $\pi_A,\pi_B,\pi_C$, every solution of \eqref{eq:system} with \eqref{eq:restrictions1} corresponds to a $3$-self-affinity of combinatorial type $\mathcal C$ of some quadrangle of affine type $Q[x,y]$. Conversely, every such $3$-self-affinity can be described by corresponding permutations $\pi_A,\pi_B,\pi_C$ and a system \eqref{eq:system} with restrictions \eqref{eq:restrictions1}. Moreover, as a reflection of such a dissection with respect to the axis $x=y$ does not affect the combinatorial type $\mathcal C$, we can assume the additional restriction
\begin{equation}\label{eq:restrictions2}
x \le y
\end{equation}
without loss of generality.

Now we have to solve all restricted systems \eqref{eq:system}, \eqref{eq:restrictions1}, \eqref{eq:restrictions2}, one for each of the $512$ possible choices of $(\pi_A,\pi_B,\pi_C)$, for the unknowns $(\alpha_{11},\ldots,\gamma_{23},x,y,s,t,m,n)$, this way finding a parametrization $Q[x,y]$ of every quadrangle admitting a $3$-self-affinity of type $\mathcal C$.

This has been done systematically by the aid of the computer algebra system Maple, see \cite{maple} for details. Only few choices $(\pi_A,\pi_B,\pi_C)$ give rise to solutions. As we are not interested in trapezoidal solutions, we collected only those permutations leading to solutions with $x \ne 1$ and $y \ne 1$. These are
\begin{eqnarray*}
&\pi^f_1=(1432,1234,4321),\; \pi^f_2=(1432,3214,2341),\; \pi^f_3=(3214,1234,1432),&\\
&\pi^f_4=(3214,4123,2143),\; \pi^f_5=(3214,2143,4123),\; \pi^f_6=(3214,3214,3412),&
\end{eqnarray*}
each giving an uncountable family of solutions, and 
\begin{eqnarray*}
&\pi^s_1=(2341,1234,1234),\;\pi^s_2=(2341,1234,2143),\;\pi^s_3=(2341,2341,1234),&\\
&\pi^s_4=(2341,2341,1432),\;\pi^s_5=(2341,2341,2143),\;\pi^s_6=(2341,1432,2143),&\\
&\pi^s_7=(2341,2143,1234),\;\pi^s_8=(2341,2143,1432),\;\pi^s_9=(2341,2143,2143),&\\
&\pi^s_{10}=(4123,1234,4321),\;\pi^s_{11}=(4123,3214,1432),\;\pi^s_{12}=(4123,3214,4321),&\\
&\pi^s_{13}=(2143,1234,3412),\;\pi^s_{14}=(2143,2341,1432),\;\pi^s_{15}=(2143,2143,3412)\,&
\end{eqnarray*}
each giving only one single solution.

Solving the system based on $\pi^f_1$ gave solutions only with $0 < x < 1$. Then all other unknowns had a unique expression in terms of $x$. In particular, $y=x^2-x+1$, so that we obtained the parameters $(x,y)=(x,x^2-x+1) \in \mathcal P$, $0 < x < 1$. This is the family $\mathbf C$ of solutions mentioned in Theorem~\ref{thm:C}.

For $\pi^f_2$, we got the family of solutions $(x,x^2-x+1)$ with $x > 0$. These are in $\overline{\mathcal P}''$. Transforming them by the substitution \eqref{eq:sub4} into solutions from $\mathcal P$ results in the same family as we got for $\pi^f_1$.

For $\pi^f_3$, we got the pairs $\left(x,\frac{1}{x}\right)$, $0 < x < 1$, that are in $\mathcal P'''$. Now the substitution \eqref{eq:sub2} translates them into the family that we obtained for $\pi^f_1$.

For $\pi^f_4$, we got $\left(\frac{m^2}{m^2-m+1},\frac{1}{m^2-m+1}\right)$, $0 < m < 1$, from $\overline{\mathcal P}'$. The substitution \eqref{eq:sub3} transforms them into the family that we got for $\pi^f_1$.

For $\pi^f_5$, we got the same pairs $(x,y)$ as we got for $\pi^f_4$. For $\pi^f_6$, we got the same pairs as we got for $\pi^f_3$. So all one-parameter families based on $\pi^f_1,\ldots,\pi^f_6$ agree.

Solving the $15$ systems based on $\pi^s_1,\ldots,\pi^s_{15}$ gives unique solutions; in particular, we get a unique pair $(x,y)$ for each case. For $\pi^s_1,\ldots,\pi^s_9$, these solutions are already in $\mathcal P$. Eliminating the unknowns $\alpha_{11},\ldots,\gamma_{23},s,t,m,n$ from a system gave two polynomial equations in $x$ and $y$ that describe the unique solution $(x,y)$ exactly. For the permutations $\pi^s_7,\pi^s_8,\pi^s_9$, one or both of these polynomial equations could even be simplified by dividing by an irrelevant factor $x$, $y$ or $x+y-2$. The results are presented in rows no. $1,\ldots,9$ of Table~\ref{tab:1}.

The unique solutions for $\pi^s_{10}$, $\pi^s_{12}$ and $\pi^s_{13}$ were found in $\overline{\mathcal P}'$. In a first step we eliminated $\alpha_{11},\ldots,\gamma_{23},s,t,m,n$ to get a system of two equations describing the unique solution in $\overline{\mathcal P}'$. Then we used the substitution \eqref{eq:sub3} for getting two equations describing the unique solution in $\mathcal P$, and we computed the solution numerically. For $\pi^s_{11}$ the first solution was in ${\mathcal P}'''$ and we worked analogously with substitution \eqref{eq:sub2}. These results are given in rows $10,...,13$ of Table~\ref{tab:1}.

For $\pi^s_{14}$, we got the pair $(x,y)=(0.43015\ldots,0.75487\ldots)\in \mathcal P$ and, after dividing by irrelevant factors $x+y$, $x-1$ and $y-1$, the respective system of equations $\{y^2+x-1=0,x^2-x-y+1=0\}$. The second equation shows that this solution $(x,y)$ is not new, as it is part of the family found by $\pi_1^f,\ldots,\pi_6^f$.

Finally, $\pi^s_{15}$ lead to $(x,y)=(0.24512\ldots,1.32471\ldots) \in \overline{\mathcal P}'$. Eliminating the other unknowns, translating the solution into $\mathcal P$ by the substitution \eqref{eq:sub3} and dividing out irrelevant factors $x+y-1$ and $x-y$, we got $(0.43015\ldots,0.75487\ldots)\in \mathcal P$ expressed as the solution of the system $\{x^2-x-y+1=0,x^2y+y^3-xy-y^2-x+y=0\}$. The first equation shows that this solution belongs to the family based on $\pi_1^f,\ldots,\pi_6^f$, so it is not new. In fact, a symbolic calculation shows that this solution agrees with the one obtained from $\pi^s_{14}$.
\end{proof}

Looking back to the proof, we see that the affine type $Q[0.43015...,0.75487...]$ is particularly remarkable, since it admits $3$-self-affinities coming from eight permutations, namely $\pi^s_{14},\pi^s_{15}$ and $\pi^f_1,\ldots,\pi^f_6$. No two of them are the same under affine congruence.
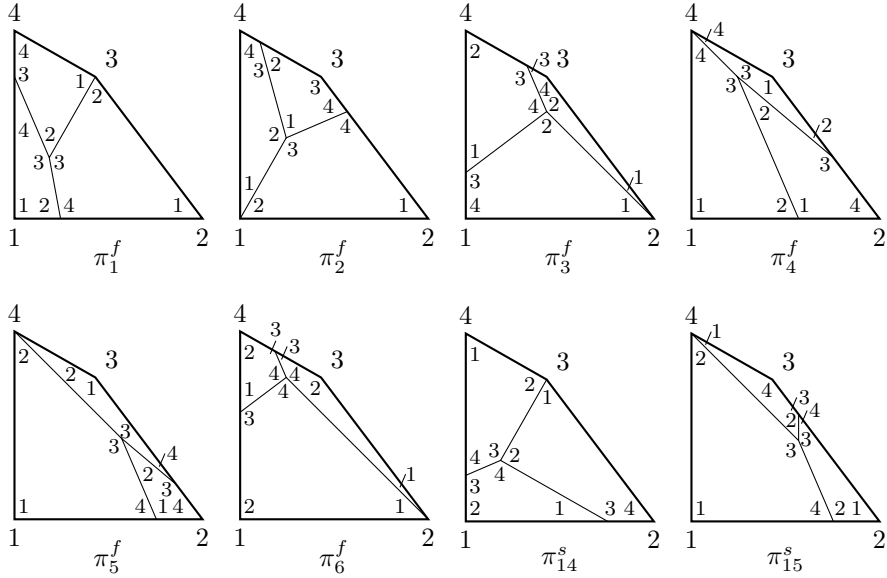
\begin{figure}
\begin{center}
\begin{tikzpicture}

\draw
  (0,0) node {\tikz[scale=2.5]{
	
\draw[thick]
  (0,0)--(1,0)--(.4301,.7548)--(0,1)--cycle
	(0,0) node[below] {1}
	(1,0) node[below] {2}
	(.4301,.7548) node[above right] {3}
	(0,1) node[above] {4}
	;
	
\draw
  (.5,-.2) node {$\pi_1^f$}
  (.2451,0)--(.1850,.3247)--(0,.7548)
	(.1850,.3247)--(.4301,.7548)
	(0.05,0.07) node {\footnotesize 1}
	(0.16,0.07) node {\footnotesize 2}
	(0.29,0.07) node {\footnotesize 4}
	(0.87,0.07) node {\footnotesize 1}
	(0.05,0.47) node {\footnotesize 4}
	(0.05,0.77) node {\footnotesize 3}
	(0.05,0.89) node {\footnotesize 4}
	(0.13,0.3) node {\footnotesize 3}
	(0.24,0.3) node {\footnotesize 3}
	(0.19,0.45) node {\footnotesize 2}
	(0.35,0.73) node {\footnotesize 1}
	(0.44,0.65) node {\footnotesize 2}
	;
  }}

  (3,0) node {\tikz[scale=2.5]{

\draw[thick]
  (0,0)--(1,0)--(.4301,.7548)--(0,1)--cycle
	(0,0) node[below] {1}
	(1,0) node[below] {2}
	(.4301,.7548) node[above right] {3}
	(0,1) node[above] {4}
	;
	
\draw
  (.5,-.2) node {$\pi_2^f$}

  (.5698,.5698)--(.2451,.4301)--(.1054,.9399)
	(.2451,.4301)--(0,0)

	(0.05,0.19) node {\footnotesize 1}
	(0.18,0.45) node {\footnotesize 2}
	(0.1,0.79) node {\footnotesize 3}
	(0.05,0.89) node {\footnotesize 4}

	(0.87,0.07) node {\footnotesize 1}
	(0.1,0.07) node {\footnotesize 2}
	(0.28,0.37) node {\footnotesize 3}
	(0.56,0.49) node {\footnotesize 4}

	(0.27,0.52) node {\footnotesize 1}
	(0.19,0.82) node {\footnotesize 2}
	(0.4,0.7) node {\footnotesize 3}
	(0.47,0.6) node {\footnotesize 4}
	;
  }}

  (6,0) node {\tikz[scale=2.5]{

\draw[thick]
  (0,0)--(1,0)--(.4301,.7548)--(0,1)--cycle
	(0,0) node[below] {1}
	(1,0) node[below] {2}
	(.4301,.7548) node[above right] {3}
	(0,1) node[above] {4}
	;
	
\draw
  (.5,-.2) node {$\pi_3^f$}

  (0,.2451)--(.4301,.5698)--(.3247,.8149)
	(.4301,.5698)--(1,0)
  ;
	
\draw[thin]	
	(0.85,0.07) node {\footnotesize 1}
	(0.43,0.49) node {\footnotesize 2}
	(0.05,0.2) node {\footnotesize 3}
	(0.05,0.07) node {\footnotesize 4}

  (0.86,0.15)--(0.89,0.22)
	(0.92,0.22) node {\footnotesize 1}
	(0.47,0.61) node {\footnotesize 2}
  (0.35,0.78)--(0.38,0.85)
	(0.42,0.85) node {\footnotesize 3}
	(0.42,0.69) node {\footnotesize 4}

	(0.05,0.36) node {\footnotesize 1}
	(0.05,0.89) node {\footnotesize 2}
	(0.29,0.75) node {\footnotesize 3}
	(0.36,0.6) node {\footnotesize 4}
	;
  }}

  (9,0) node {\tikz[scale=2.5]{

\draw[thick]
  (0,0)--(1,0)--(.4301,.7548)--(0,1)--cycle
	(0,0) node[below] {1}
	(1,0) node[below] {2}
	(.4301,.7548) node[above right] {3}
	(0,1) node[above] {4}
	;
	
\draw
  (.5,-.2) node {$\pi_4^f$}

  (.5698,0)--(.2451,.7548)--(.7548,.3247)
	(.2451,.7548)--(0,1)
  ;
	
\draw[thin]	
	(0.05,0.07) node {\footnotesize 1}
	(0.48,0.07) node {\footnotesize 2}
	(0.21,0.71) node {\footnotesize 3}
	(0.05,0.87) node {\footnotesize 4}

	(0.6,0.07) node {\footnotesize 1}
	(0.39,0.56) node {\footnotesize 2}
	(0.71,0.29) node {\footnotesize 3}
	(0.87,0.07) node {\footnotesize 4}

	(0.41,0.7) node {\footnotesize 1}
  (0.65,0.43)--(0.68,0.5)
	(0.72,0.5) node {\footnotesize 2}
	(0.29,0.78) node {\footnotesize 3}
  (0.075,0.94)--(0.105,1.01)
  (0.145,1.01) node {\footnotesize 4}
	;
  }}

  (0,-4) node {\tikz[scale=2.5]{

\draw[thick]
  (0,0)--(1,0)--(.4301,.7548)--(0,1)--cycle
	(0,0) node[below] {1}
	(1,0) node[below] {2}
	(.4301,.7548) node[above right] {3}
	(0,1) node[above] {4}
	;
	
\draw
  (.5,-.2) node {$\pi_5^f$}

  (.7548,0)--(.5698,.4301)--(.8603,.1850)
	(.5698,.4301)--(0,1)
  ;
	
\draw[thin]	
	(0.05,0.07) node {\footnotesize 1}
	(0.05,0.87) node {\footnotesize 2}
	(0.53,0.39) node {\footnotesize 3}
	(0.67,0.07) node {\footnotesize 4}

	(0.79,0.07) node {\footnotesize 1}
	(0.71,0.24) node {\footnotesize 2}
	(0.81,0.16) node {\footnotesize 3}
	(0.87,0.07) node {\footnotesize 4}

	(0.41,0.7) node {\footnotesize 1}
	(0.3,0.77) node {\footnotesize 2}
	(0.59,0.47) node {\footnotesize 3}
  (0.77,0.28)--(0.8,0.35)
  (0.84,0.35) node {\footnotesize 4}
	;
  }}

  (3,-4) node {\tikz[scale=2.5]{

\draw[thick]
  (0,0)--(1,0)--(.4301,.7548)--(0,1)--cycle
	(0,0) node[below] {1}
	(1,0) node[below] {2}
	(.4301,.7548) node[above right] {3}
	(0,1) node[above] {4}
	;
	
\draw
  (.5,-.2) node {$\pi_6^f$}

  (0,.5698)--(.2451,.7548)--(.1850,.8945)
	(.2451,.7548)--(1,0)
  ;
	
\draw[thin]	
	(0.85,0.07) node {\footnotesize 1}
	(0.05,0.07) node {\footnotesize 2}
	(0.05,0.54) node {\footnotesize 3}
	(0.23,0.68) node {\footnotesize 4}

	(.85,.17)--(.88,.24)
	(0.91,0.24) node {\footnotesize 1}
	(0.4,0.7) node {\footnotesize 2}
	(.22,.85)--(.25,.92)
	(0.29,0.92) node {\footnotesize 3}
	(0.29,0.77) node {\footnotesize 4}

	(0.05,0.69) node {\footnotesize 1}
	(0.05,0.89) node {\footnotesize 2}
  (0.16,0.88)--(0.19,0.95)
	(0.19,1.01) node {\footnotesize 3}
  (0.18,0.78) node {\footnotesize 4}
	;
  }}

  (6,-4) node {\tikz[scale=2.5]{

\draw[thick]
  (0,0)--(1,0)--(.4301,.7548)--(0,1)--cycle
	(0,0) node[below] {1}
	(1,0) node[below] {2}
	(.4301,.7548) node[above right] {3}
	(0,1) node[above] {4}
	;
	
\draw
  (.5,-.2) node {$\pi_{14}^s$}

  (.7548,0)--(.1850,.3247)--(0,.2451)
	(.1850,.3247)--(.4301,.7548)
  ;
	
\draw[thin]	
	(0.5,0.07) node {\footnotesize 1}
	(0.05,0.07) node {\footnotesize 2}
	(0.05,0.19) node {\footnotesize 3}
	(0.18,0.25) node {\footnotesize 4}

	(0.435,0.66) node {\footnotesize 1}
	(0.26,0.35) node {\footnotesize 2}
	(0.77,0.07) node {\footnotesize 3}
	(0.87,0.07) node {\footnotesize 4}

	(0.05,0.89) node {\footnotesize 1}
	(0.34,0.73) node {\footnotesize 2}
	(0.15,.38) node {\footnotesize 3}
  (0.05,0.34) node {\footnotesize 4}
	;
  }}
	
	(9,-4) node {\tikz[scale=2.5]{

\draw[thick]
  (0,0)--(1,0)--(.4301,.7548)--(0,1)--cycle
	(0,0) node[below] {1}
	(1,0) node[below] {2}
	(.4301,.7548) node[above right] {3}
	(0,1) node[above] {4}
	;
	
\draw
  (.5,-.2) node {$\pi_{15}^s$}

  (.7548,0)--(.5698,.4301)--(.5698,.5698)
	(.5698,.4301)--(0,1)
  ;
	
\draw[thin]	
	(0.05,0.07) node {\footnotesize 1}
	(0.05,0.87) node {\footnotesize 2}
	(0.53,0.38) node {\footnotesize 3}
	(0.66,0.07) node {\footnotesize 4}

	(0.88,0.07) node {\footnotesize 1}
	(0.79,0.07) node {\footnotesize 2}
	(0.61,0.44) node {\footnotesize 3}
  (0.585,0.52)--(0.615,0.59)
	(0.655,0.59) node {\footnotesize 4}

  (0.075,0.94)--(0.105,1.01)
	(0.135,1.01) node {\footnotesize 1}
	(0.53,0.54) node {\footnotesize 2}
  (0.53,0.59)--(0.56,0.66)
  (0.6,0.66) node {\footnotesize 3}
	(0.4,.7) node {\footnotesize 4}
	;
  }}
	
	;

\end{tikzpicture}
\end{center}
\caption{The realizations of $3$-self-affinities of combinatorial type $\mathcal C$ of the quadrangle $Q[0.43015\ldots,0.75487\ldots]$.\label{fig:example}}
\end{figure}
Figure~\ref{fig:example} illustrates these realizations, all given for the natural representative of $Q[0.43015...,0.75487...]$ with vertices $\VECT{0}{0}$, $\VECT{1}{0}$, $\VECT{0.43015...}{0.75487...}$ and $\VECT{0}{1}$. The numbers indicate how the affine maps from the dissected quadrangle onto its pieces act on the vertices.


\section{Summarizing and illustrating all types of $3$-self-affine convex quadrangles\label{sec:union}}

\begin{thm}
\begin{itemize}
\item[(i)]
Every dissection of a convex quadrangle into $3$ convex quadrangles is based on one of the combinatorial types $\mathcal A$, $\mathcal B$ or $\mathcal C$.

\item[(ii)]
A convex quadrangle $Q$ is $3$-self-affine based on a dissection of combinatorial type $\mathcal A$ if and only if its unique parametrization $Q[x,y]$ with $(x,y) \in \mathcal P$ satisfies $(x,y) \in \mathbf T=\{(x,1): 0 < x \le 1\}$ (i.e., $Q$ is a trapezoid) or $(x,y) \in \mathbf A$ (see Theorem~\ref{thm:AB}(i)).

\item[(iii)]
A convex quadrangle $Q$ is $3$-self-affine based on a dissection of combinatorial type $\mathcal B$ if and only if its unique parametrization $Q[x,y]$ with $(x,y) \in \mathcal P$ satisfies $(x,y) \in \mathbf T\cup \mathbf B_1 \cup \mathbf B_2$ (see Theorem~\ref{thm:AB}(ii)).

\item[(iv)]
A convex quadrangle $Q$ is $3$-self-affine based on a dissection of combinatorial type $\mathcal C$ if and only if its unique parametrization $Q[x,y]$ with $(x,y) \in \mathcal P$ satisfies $(x,y) \in \mathbf T \setminus \{(1,1)\}$ (i.e., $Q$ is a trapezoid, but no parallelogram) or $(x,y) \in \mathbf C$ (see Theorem~\ref{thm:C}) or $(x,y)$ is one of the $13$ singular parameters given in Table~\ref{tab:1}.
\end{itemize}
\end{thm}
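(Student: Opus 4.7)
The plan is to treat the statement as a direct amalgamation of the pieces already established in Sections~\ref{sec:glass-cut} and \ref{sec:non-glass-cut}; no new computation should be required.

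For part (i), I would argue by the glass-cut / non-glass-cut dichotomy. Any $3$-piece dissection of a convex quadrangle into convex quadrangles is either a glass-cut dissection, in which case \cite[Lemma~10]{richter2024+} (cited already before Proposition~\ref{prop:trapezoidsAB}) restricts the combinatorics to types $\mathcal A$ or $\mathcal B$, or it is not, in which case the short convexity argument opening Section~\ref{sec:non-glass-cut} applies: some dissecting segment must terminate in the interior of $Q$, and at that point three segments must meet and be a common vertex of all three pieces, giving combinatorial type $\mathcal C$.

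For parts (ii)--(iv), I would split each characterization along the trapezoid locus $\mathbf T=\{(x,1):0<x\le 1\}\subset\mathcal P$ versus its complement. On $\mathbf T$, Proposition~\ref{prop:trapezoidsAB} supplies the realizations for types $\mathcal A$ and $\mathcal B$ for every trapezoid, while Proposition~\ref{prop:trapezoidsC} supplies the realizations for type $\mathcal C$ precisely when the trapezoid is not a parallelogram. Off $\mathbf T$, the non-trapezoidal cases of types $\mathcal A$ and $\mathcal B$ are exactly the content of Theorem~\ref{thm:AB}(i) and (ii) respectively, and those of type $\mathcal C$ are exactly the content of Theorem~\ref{thm:C}. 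Since Section~\ref{sec:parameters} guarantees that every affine type of a convex quadrangle has a unique parameter pair in $\mathcal P$, taking the union of these parameter sets is unambiguous and exhausts the answer in each case.

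The only subtle point I would flag explicitly is the exclusion of the parallelogram $(x,y)=(1,1)$ in part (iv); this is already recorded in the first paragraph of the proof of Proposition~\ref{prop:trapezoidsC}, where the three segments emanating from the interior vertex of a hypothetical $\mathcal C$-dissection of a parallelogram would have to be parallel to its sides, forcing two of them to coincide in direction and contradicting type $\mathcal C$. The main obstacle, therefore, is not mathematical but organizational: ensuring that the parameter sets $\mathbf T$, $\mathbf A$, $\mathbf B_1\cup\mathbf B_2$, $\mathbf C$, together with the $13$ singular pairs of Table~\ref{tab:1}, are consistently expressed in the unique-parametrization form on $\mathcal P$, which has already been engineered in the preceding theorems.
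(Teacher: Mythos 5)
Your proposal is correct and follows essentially the same route as the paper's own proof: part (i) via the glass-cut versus non-glass-cut dichotomy (invoking the cited combinatorial restriction for glass-cuts and the convexity argument opening Section~\ref{sec:non-glass-cut} otherwise), and parts (ii)--(iv) by combining Propositions~\ref{prop:trapezoidsAB} and \ref{prop:trapezoidsC} with Theorems~\ref{thm:AB} and \ref{thm:C} along the trapezoid/non-trapezoid split. Your explicit flagging of the parallelogram exclusion in (iv) is a slight elaboration of what the paper leaves implicit, but it introduces no deviation in method.
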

\begin{proof}
Glass-cut dissections correspond to types $\mathcal A$ and $\mathcal B$, and we have seen that non-glass-cut dissections result in type $\mathcal C$. This gives (i).
Claims (ii) and (iii) are obtained from Proposition~\ref{prop:trapezoidsAB} and Theorem~\ref{thm:AB}. Proposition~\ref{prop:trapezoidsC} and Theorem~\ref{thm:C} yield (iv).
\end{proof}

Figure~\ref{fig:summary} illustrates the $5$ families $\mathbf T,\mathbf A,\mathbf B_1, \mathbf B_2, \mathbf C \subseteq \mathcal P$ as well as the $13$ singular parameters from Table~\ref{tab:1}.
\begin{figure}
\begin{center}
\begin{tikzpicture}[scale=9cm/1cm]

\draw
  (0,1.0005) node[inner sep=0pt,below right] {\includegraphics[trim=2.215cm 17.75cm 6.375cm 6.92cm, clip, width=9cm]{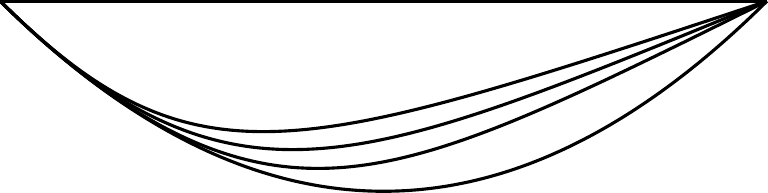}}
	;

\draw[dashed]
  (.5,.5)--(0,1) 
	(.5,.5)--(1,1)
	;

\draw[thin]
  (.3,.7) node[below left] {\LARGE $\mathcal P$}
  (.63,1) node[below right] {$\mathbf T$}
  (.63,.917) node[right] {$\mathbf A$}
  (.63,.81) node[right] {$\mathbf B_1$}
  (.66,.864)--(.71,.84) (.7,.84) node[right] {$\mathbf B_2$}
  (.63,.751) node[right] {$\mathbf C$}
	;

\fill
  (0.543689013,0.8392867552) circle (.005) 
  (0.557061798,0.8549068110) circle (.005) 
  (0.546602348,0.7266988258) circle (.005) 
  (0.506785038,0.6756719098) circle (.005) 
  (0.4775922500,0.8153009687) circle (.005) 
  (0.258055873,0.8478103848) circle (.005) 
  (0.587505999,0.7825751215) circle (.005) 
  (0.5000000000,0.719223594) circle (.005) 
  (0.5910090475,0.8540328194) circle (.005) 
  (0.4180334103,0.7183116995) circle (.005) 
  (0.3313309486,0.7878302113) circle (.005) 
  (.4,.6666666666) circle (.005) 
  (0.5971728861,0.8758670596) circle (.005) 
	;

\end{tikzpicture}
\end{center}
\caption{All parameters $(x,y) \in \mathcal P$ representing $3$-self-affine convex quadrangles $Q[x,y]$.\label{fig:summary}}
\end{figure}
In other words, a convex quadrangle $Q$ is $3$-self-affine if and only if there is an affine transformation mapping the vertices of $Q$ onto $\VECT{0}{1}$, $\VECT{0}{0}$, $\VECT{1}{0}$ and a point $\VECT{x}{y}$ displayed in Figure~\ref{fig:summary}.


\section{Non-convex quadrangles\label{sec:non-convex}}

Knowing that glass-cut self-affinity of quadrangles (and even of general topological discs) reduces to convex objects \cite[Theorem~1(i)]{richter2024+}, we consider it worth mentioning that there exist non-convex self-affine quadrangles. The positive result is also remarkable given the fact that there are no self-similar non-convex quadrangles \cite[Folgerung~3.2]{osburg2004}, see also \cite[Satz~6]{ditrich1995}.

\begin{prop}\label{prop:nonconvex_positive}
For every integer $n \ge 3$, there is an $n$-self-affine non-convex quadrangle.
\end{prop}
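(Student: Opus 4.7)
My plan is to exhibit, for every integer $n\ge 3$, an explicit non-convex quadrangle $Q$ together with a dissection of $Q$ into $n$ affine copies of itself. Up to affine equivalence I may assume $Q$ has vertices $v_1=\VECT{0}{1}$, $v_2=\VECT{0}{0}$, $v_3=\VECT{a}{b}$, $v_4=\VECT{1}{0}$ with $a,b>0$ and $a+b<1$, so that $v_3$ is the unique reflex vertex; the search thus reduces to a two-parameter family.

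For the base cases $n=3$ and $n=4$ I would look for dissections in which the reflex vertex $v_3$ is a common vertex of all pieces, so that each piece inherits part of the reflex angle at its image of $v_3$ and is itself non-convex. Concretely I would draw internal segments from $v_3$ to suitable points on the boundary of $Q$ (possibly augmented by one further internal segment connecting two such points) so as to cut $Q$ into three or four quadrangles. After enumerating the few plausible combinatorial types, I would, for each type and each admissible assignment of vertex labels, write down the polynomial system expressing that every piece is an affine image of $Q$, entirely analogous to \eqref{eq:system}, and solve for $(a,b)$ together with the cut-point parameters; any solution with $(a,b)$ strictly inside $\{a,b>0,\ a+b<1\}$ yields the required non-convex quadrangle.

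For $n\ge 5$ I would use iterated refinement: given any $m$-self-affine dissection $Q=\bigcup_{i=1}^{m}\alpha_i(Q)$, I may replace a single piece $\alpha_j(Q)$ by the image under $\alpha_j$ of a second self-affine dissection of $Q$. Since non-convex quadrangles admit no $2$-self-affine dissection (established subsequently in the same section), the minimal refinement step uses $3$ pieces and raises the total count by $2$. Starting from the two base cases $m=3$ and $m=4$, this iteration reaches every $n\ge 3$.

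The main obstacle will be the base cases, and specifically the combinatorial enumeration: the reflex angle at $v_3$ severely restricts where internal segments can meet the boundary, so one must carefully identify the few viable combinatorial types before setting up the polynomial system. Once such a type is fixed, the system can be solved by hand or by computer algebra as in Section~\ref{sec:non-glass-cut}; the delicate point is to verify that the solution lies in the open non-convexity region rather than on its boundary, where $Q$ would degenerate to a triangle or become convex.
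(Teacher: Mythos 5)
There is a genuine gap, in fact two. First, your base cases $n=3$ and $n=4$ are only a search plan: you describe enumerating combinatorial types and solving polynomial systems analogous to \eqref{eq:system}, but you give no argument that any of these systems actually has a solution with $(a,b)$ in the open region $a,b>0$, $a+b<1$. That existence claim is the entire content of the proposition, and it is exactly where the paper does its work: it writes down explicit affine maps (Lemmas~\ref{lem:nonconvex1} and \ref{lem:nonconvex2}) that dissect a suitable enlargement of $Q[x,y]$ into $n-1$ copies and split $Q[x,y]$ into one copy plus a remainder, and then proves by the intermediate value theorem that the polynomial $f_n$ has a root $x_0\in(0,1)$ making the two constructions fit together for $Q[x_0,x_0(1-x_0)]$. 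Without some such existence argument your proof does not get off the ground.

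Second, even granting the base cases, your induction step does not reach all $n$. Refining one piece of an $m$-self-affine dissection of $Q$ by a $3$-self-affine dissection of $Q$ requires that the \emph{same} quadrangle $Q$ be $3$-self-affine. If the quadrangles $Q_3$ and $Q_4$ produced by your two base cases are different affine types (and nothing you say forces them to coincide), then from $Q_3$ you obtain only the odd values $3,5,7,\ldots$, and from $Q_4$, refining by its own $4$-dissection, only $4,7,10,\ldots$; the value $n=6$ (and $8$, $12$, $14$, \ldots) is never reached. The paper sidesteps this entirely by constructing, for each $n\ge 3$ separately, a quadrangle $Q[x_0,x_0(1-x_0)]$ with $x_0$ depending on $n$ that is $n$-self-affine directly; the refinement trick appears only afterwards, in Corollary~\ref{cor:nonconvex}, and there it is stated with the honest conclusion that a single $n$-self-affine quadrangle yields only the arithmetic progression $n+k(n-1)$. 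To repair your argument you would need either a base case for every $n$ (which is what the paper provides) or a single non-convex quadrangle shown to be both $3$- and $4$-self-affine.
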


The proof is prepared by two lemmas. In the non-convex case we parametrize quadrangles $Q[x,y]$ as in Section~\ref{sec:parameters}, but with $\VECT{x}{y}$ representing the non-convex vertex. Then $0<x,y$ and $x+y<1$, and the only alternative parametrization of $Q[x,y]$ is $Q[y,x]$. 

\begin{lem}\label{lem:nonconvex1}
Let $Q[x,y]$ be non-convex and let $k \ge 2$ be an integer. Then the quadrangle $Q\left[\frac{1-(1-(x+y))^k}{x+y}(x,y)\right]$ admits a dissection into $k$ affine images of $Q[x,y]$.
\end{lem}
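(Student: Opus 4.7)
The plan is to present the dissection explicitly as a ``telescoping tower''
\[
Q[V_k] = Q[V_1] \cup R_2 \cup R_3 \cup \cdots \cup R_k,
\]
where each $R_j$ is an affine image of $Q[x,y]$ sitting in the layer between two successive reflex vertices. Set $s := x+y$ and $V_j := \frac{1-(1-s)^j}{s}(x,y)$ for $j \ge 1$, so that $V_1 = (x,y)$ and the target is $Q[V_k]$. The crucial algebraic fact is the recursion
\[
V_j = (x,y) + (1-s)\,V_{j-1}, \qquad j \ge 2,
\]
which follows at once from $\frac{1-(1-s)^j}{s} = 1 + (1-s)\,\frac{1-(1-s)^{j-1}}{s}$.

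Let $T$ be the triangle with vertices $\VECT{0}{1}$, $\VECT{0}{0}$, $\VECT{1}{0}$ and let $T'_j$ be the triangle with vertices $\VECT{0}{1}$, $V_j$, $\VECT{1}{0}$; then $Q[V_j] = T \setminus \INT(T'_j)$. Rewriting the recursion as $V_j = (1-s)\,V_{j-1} + s\cdot\bigl(\tfrac{x}{s}, \tfrac{y}{s}\bigr)$ displays $V_j$ as a strict convex combination of $V_{j-1}$ and the point $\bigl(\tfrac{x}{s}, \tfrac{y}{s}\bigr)$ on the segment from $\VECT{0}{1}$ to $\VECT{1}{0}$; hence $V_j \in \INT(T'_{j-1})$. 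This yields the chain $T'_k \subset T'_{k-1} \subset \cdots \subset T'_1$ and correspondingly $Q[V_1] \subset Q[V_2] \subset \cdots \subset Q[V_k]$, and identifies each $R_j := T'_{j-1}\setminus\INT(T'_j)$ as the non-convex quadrangle with cyclic vertices $\VECT{0}{1}, V_{j-1}, \VECT{1}{0}, V_j$ and reflex vertex $V_j$.

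It remains to exhibit each $R_j$ as an affine image of $Q[x,y]$. For $j \ge 2$ let $\alpha_j$ be the unique affine map satisfying
\[
\alpha_j\VECT{0}{1} = \VECT{0}{1}, \quad \alpha_j\VECT{0}{0} = V_{j-1}, \quad \alpha_j\VECT{1}{0} = \VECT{1}{0}.
\]
A short computation gives
\[
\alpha_j\VECT{x}{y} = (x,y) + (1-s)\,V_{j-1} = V_j,
\]
and the determinant of the linear part equals $1 - (x_{j-1}+y_{j-1}) = (1-s)^{j-1} > 0$. Hence $\alpha_j$ is a bijective orientation-preserving affine map, so $\alpha_j(Q[x,y])$ is the non-convex quadrangle with cyclic vertices $\VECT{0}{1}, V_{j-1}, \VECT{1}{0}, V_j$, which is precisely $R_j$.

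Putting everything together, $Q[V_k]$ is dissected into the $k$ affine images $Q[V_1] = Q[x,y]$ (taken as its own image) together with $R_2 = \alpha_2(Q[x,y]), \ldots, R_k = \alpha_k(Q[x,y])$, with pairwise disjoint interiors by construction. The main obstacle I anticipate is the third step, namely verifying that the three-vertex-specified map $\alpha_j$ automatically sends $\VECT{x}{y}$ to $V_j$; but this compatibility is exactly what the recursion encodes, so once the recursion is written in the form $V_j = (x,y) + (1-s)\,V_{j-1}$, the whole decomposition falls out of routine linear algebra.
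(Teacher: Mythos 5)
Your proof is correct and follows essentially the same route as the paper: the same telescoping tower of layers with vertices $\left(\begin{smallmatrix}0\\1\end{smallmatrix}\right)$, $V_{j-1}$, $\left(\begin{smallmatrix}1\\0\end{smallmatrix}\right)$, $V_j$, with your maps $\alpha_j$ coinciding with the iterates $\alpha^{j-1}$ of the single affine map the paper fixes (fixing $\left(\begin{smallmatrix}0\\1\end{smallmatrix}\right)$ and $\left(\begin{smallmatrix}1\\0\end{smallmatrix}\right)$ and sending $\left(\begin{smallmatrix}0\\0\end{smallmatrix}\right)$ to $(x,y)$). You merely trade the paper's induction on $\alpha^m\left(\begin{smallmatrix}0\\0\end{smallmatrix}\right)$ for the equivalent recursion $V_j=(x,y)+(1-s)V_{j-1}$, and you spell out the nesting $T'_k\subset\cdots\subset T'_1$ that the paper leaves to its figure.
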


\begin{proof}
The affine map 
$$
\alpha\VECT{\xi}{\eta}=
\left(\begin{array}{cc}
1-x & -x\\
-y & 1-y
\end{array}\right)
\VECT{\xi}{\eta}+\VECT{x}{y}
$$
satisfies 
$$
\alpha^m\VECT{1}{0}=\VECT{1}{0}, \alpha^m\VECT{0}{1}=\VECT{0}{1},  \alpha^m\VECT{0}{0}=\frac{1-(1-(x+y))^m}{x+y}\VECT{x}{y}
$$
for $m=0,1,\ldots$, as can easily be seen by induction. Then $\alpha^m(Q[x,y])$ has the vertices $\VECT{0}{1}$, $\alpha^m\VECT{0}{0}$, $\VECT{1}{0}$ and $\alpha^m\VECT{x}{y}=\alpha^{m+1}\VECT{0}{0}$, and the quadrangle $Q\left[\frac{1-(1-(x+y))^k}{x+y}(x,y)\right]$ is dissected into $\alpha^m(Q[x,y])$, $m=0,\ldots,k-1$, see Figure~\ref{fig:nonconvex1}. 
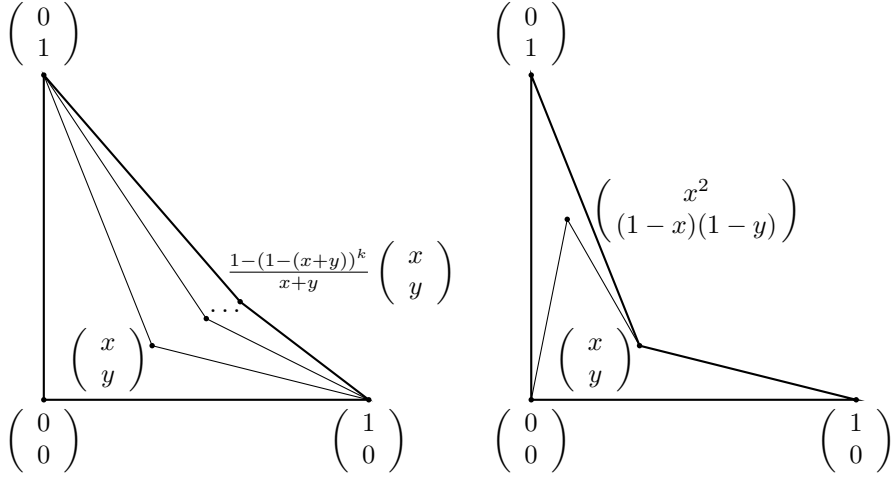
\begin{figure}
\begin{center}
\begin{tikzpicture}[xscale=.36,yscale=.36]

\draw[thick]
  (0,12)--(0,0)--(12,0)--(7.25,3.625)--cycle
	;

\draw
  (12,0)--(4,2)--(0,12)--(6,3)--cycle
	;

\fill
  (0,0) circle (.1) node[below] {$\VECT{0}{0}$} 
  (12,0) circle (.1) node[below] {$\VECT{1}{0}$} 
  (0,12) circle (.1) node[above] {$\VECT{0}{1}$} 
  (4,2) circle (.1) 
	(4.2,3) node[below left] {$\VECT{x}{y}$} 
  (6,3) circle (.1)  
	(6.775,3.3125) node {$\dots$}
	(7.25,3.625) circle (.1)
	(6.4,3.2) node[above right] {$\frac{1-(1-(x+y))^k}{x+y}\VECT{x}{y}$}
	;

\draw[thick]
	(18,0)--(30,0)--(22,2)--(18,12)--cycle
	;
	
\draw
	(18,0)--(19.33,6.67)--(22,2)
	;
	
\fill
	(18,0) circle (.1) node[below] {$\VECT{0}{0}$} 
  (30,0) circle (.1) node[below] {$\VECT{1}{0}$} 
  (18,12) circle (.1) node[above] {$\VECT{0}{1}$} 
  (22,2) circle (.1) 
	(22.2,3) node[below left] {$\VECT{x}{y}$}
	(19.33,6.67) circle (.1) 
	(19.9,7) node[right] {$\VECT{x^2}{\!\!(1-x)(1-y)\!\!}$}
  ;
	
\end{tikzpicture}
\end{center}
\caption{Dissections from Lemmas~\ref{lem:nonconvex1} and \ref{lem:nonconvex2}.\label{fig:nonconvex1}}
\end{figure}
\end{proof}

\begin{lem}\label{lem:nonconvex2}
Let $Q[x,y]$ be non-convex. Then $Q[x,y]$ admits a dissection into an affine image of $Q[x,y]$ and a remaining quadrangle with vertices $\VECT{0}{0}$, $\VECT{1}{0}$, $\VECT{x}{y}$ and $\VECT{x^2}{(1-x)(1-y)}$.
(We admit the last quadrangle to be convex or non-convex or to degenerate into a triangle, depending on the angle at $\VECT{x}{y}$.)
\end{lem}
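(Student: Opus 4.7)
The plan is to exhibit the dissection explicitly by a two-segment path and then write down by hand an affine map sending $Q[x,y]$ onto one of the pieces.

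First, I would set $p=\VECT{x^2}{(1-x)(1-y)}$ and check that $p$ lies in the interior of the triangle $\triangle$ with vertices $\VECT{0}{0}$, $\VECT{0}{1}$, $\VECT{x}{y}$. The direct barycentric identity
$$
p=(1-x-y)\VECT{0}{1}+y\VECT{0}{0}+x\VECT{x}{y}
$$
has coefficients $1-x-y$, $y$, $x$, all strictly positive under the assumptions $x,y>0$ and $x+y<1$. Since $\triangle\subseteq Q[x,y]$, the polygonal path $\VECT{0}{0}\to p\to\VECT{x}{y}$ then lies inside $\triangle$, meets the boundary of $Q[x,y]$ only in its endpoints, and thus splits $Q[x,y]$ into two topological discs: one bounded by the side $\VECT{0}{1}\VECT{0}{0}$, the side $\VECT{x}{y}\VECT{0}{1}$ and the path, with vertices $\VECT{0}{1},\VECT{0}{0},p,\VECT{x}{y}$; and one bounded by the remaining two sides of $Q[x,y]$ and the path, with vertices $\VECT{0}{0},\VECT{1}{0},\VECT{x}{y},p$. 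The second piece is the ``remaining quadrangle'' of the statement.

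To identify the first piece as an affine image of $Q[x,y]$, I would write down the candidate map
$$
\beta\VECT{\xi}{\eta}=\begin{pmatrix} x & 0 \\ y-1 & -1 \end{pmatrix}\VECT{\xi}{\eta}+\VECT{0}{1}
$$
and verify directly that it sends the vertices $\VECT{0}{1},\VECT{0}{0},\VECT{1}{0},\VECT{x}{y}$ of the natural representative of $Q[x,y]$ to $\VECT{0}{0},\VECT{0}{1},\VECT{x}{y},p$, respectively. The first three assignments are immediate, and only $\beta\VECT{x}{y}=p$ requires a short calculation, namely the identity $(y-1)x-y+1=(1-x)(1-y)$.

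The only real obstacle is guessing $\beta$. The constraints that pin it down are that an affine image of a non-convex quadrangle must have its reflex vertex as the image of the reflex vertex, so $\VECT{x}{y}$ must be mapped to the reflex vertex of the first piece, which is $p$ (already shown to lie strictly inside the triangle spanned by the other three vertices); the remaining three vertices are then matched by the cyclic ordering of the boundary (up to orientation). Once these assignments are fixed, a $2\times 2$ linear solve produces the matrix above, and everything else is a short verification.
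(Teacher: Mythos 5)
Your proposal is correct and follows essentially the same route as the paper: the paper's proof consists precisely of writing down the same map $\beta$ and checking its action on the four vertices. Your additional barycentric verification that $\VECT{x^2}{(1-x)(1-y)}$ lies strictly inside the triangle $\VECT{0}{0}\VECT{0}{1}\VECT{x}{y}$ is a welcome extra detail (it justifies that the two pieces genuinely dissect $Q[x,y]$, which the paper leaves to the reader), but it does not change the method.
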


\begin{proof}
The affine map 
$$
\beta\VECT{\xi}{\eta}=
\left(\begin{array}{cc}
x & 0\\
y-1 & -1
\end{array}\right)
\VECT{\xi}{\eta}+\VECT{0}{1}
$$
satisfies 
\begin{align*}
\beta\VECT{0}{1}=\VECT{0}{0},\quad & \beta\VECT{0}{0}=\VECT{0}{1},\\
\beta\VECT{1}{0}=\VECT{x}{y},\quad &\beta\VECT{x}{y}=\VECT{x^2}{(1-x)(1-y)}.
\end{align*}
We see that $Q[x,y]$ splits into $\beta(Q[x,y])$ and the remainder described in Lemma~\ref{lem:nonconvex2}, see Figure~\ref{fig:nonconvex1}.
\end{proof}

\begin{proof}[Proof of Proposition~\ref{prop:nonconvex_positive}]
We shall show that, for a particular choice of $(x,y)$, the quadrangle $Q\left[\frac{1-(1-(x+y))^{n-1}}{x+y}(x,y)\right]$, which has a dissection into $n-1$ affine images of $Q[x,y]$ by Lemma~\ref{lem:nonconvex1}, can be transformed by an affine map into the remaining quadrangle $R$ from Lemma~\ref{lem:nonconvex2}. Then $Q[x,y]$ from Lemma~\ref{lem:nonconvex2} is dissected into $n$ affine copies of itself. 

We proceed as follows. The polynomial 
$$
f_n(x)=\left(1-(1-x)^{2n-2}\right)\left(1-x+x^2\right)-x(2-x)
$$
and its derivative
$$
f_n'(x)= (2n-2)(1-x)^{2n-3}\left(1-x+x^2\right)+\left(1-(1-x)^{2n-2}\right)(-1+2x)-2+2x
$$
satisfy $f_n(0)=f_n(1)=0$, $f_n'(0)=2(n-2)>0$ and $f_n'(1)=1>0$, whence the intermediate value theorem gives us $x_0 \in (0,1)$ such that 
\begin{equation}\label{eq:zero}
f_n(x_0)=0.
\end{equation} 

Now we work with the particular non-convex quadrangle $Q[x,y]=Q[x_0,x_0(1-x_0)]$. Then 
$$
Q\left[\frac{1-(1-(x+y))^{n-1}}{x+y}(x,y)\right]=Q\left[\frac{1-(1-x_0)^{2n-2}}{2-x_0}(1,1-x_0)\right],
$$
and the vertices of the remaining quadrangle $R$ are
$$
\VECT{0}{0},\quad \VECT{1}{0},\quad \VECT{x}{y}=x_0\VECT{1}{1-x_0},
$$
$$
\VECT{x^2}{(1-x)(1-y)}=\VECT{x_0^2}{(1-x_0)\left(1-x_0+x_0^2\right)}.
$$

The affine map 
$$
\gamma\VECT{\xi}{\eta}=
\left(\begin{array}{cc}
x_0^2 & 1\\
(1-x_0)\left(1-x_0+x_0^2\right) & 0
\end{array}\right)
\VECT{\xi}{\eta}
$$
satisfies
$$
\gamma\VECT{0}{0}=\VECT{0}{0},\; \gamma\VECT{1}{0}=\VECT{x_0^2}{(1-x_0)\left(1-x_0+x_0^2\right)},\; \gamma\VECT{0}{1}=\VECT{1}{0}.
$$
For proving that $\gamma\left(Q\left[\frac{1-(1-x_0)^{2n-2}}{2-x_0}(1,1-x_0)\right]\right)=R$, it remains to verify that
$$
\gamma\left(\frac{1-(1-x_0)^{2n-2}}{2-x_0}\VECT{1}{1-x_0}\right)=x_0\VECT{1}{1-x_0}.
$$
This is
$$
\frac{1-(1-x_0)^{2n-2}}{2-x_0}\left(1-x_0+x_0^2 \right)\VECT{1}{1-x_0}=x_0\VECT{1}{1-x_0},
$$
and amounts to
$$
\left(1-(1-x_0)^{2n-2}\right)\left(1-x_0+x_0^2 \right)=x_0(2-x_0),
$$
which is nothing but \eqref{eq:zero}.

Figure~\ref{fig:nonconvex2} illustrates the cases $n=3$ (with $Q[0.43015...,0.24512...]$) and $n=4$ (with $Q[0.48662...,0.24982...]$).
\begin{figure}
\begin{center}
\begin{tikzpicture}[scale=4]

\draw
  (0,0) node{\tikz[scale=4]{
\draw[thick]
  (0,0)--(1,0)--(.43015,.24512)--(0,1)--cycle
	;
\draw
  (0,0)--(.18503,.43015)--(.43015,.24512)
  (.18503,.43015)--(.32471,.18503)--(1,0)
	;}}
	(1.4,0) node{\tikz[scale=4]{
\draw[thick]
  (0,0)--(1,0)--(.48662,.24982)--(0,1)--cycle
	;
\draw
  (0,0)--(.23680,.38512)--(.48662,.24982)
  (.23680,.38512)--(.36505,.18741)--(1,0)--(.46126,.23680)--cycle
	;}}
	;
	
\end{tikzpicture}
\end{center}
\caption{Realizations of Proposition~\ref{prop:nonconvex_positive} for $n=3,4$.\label{fig:nonconvex2}}
\end{figure}
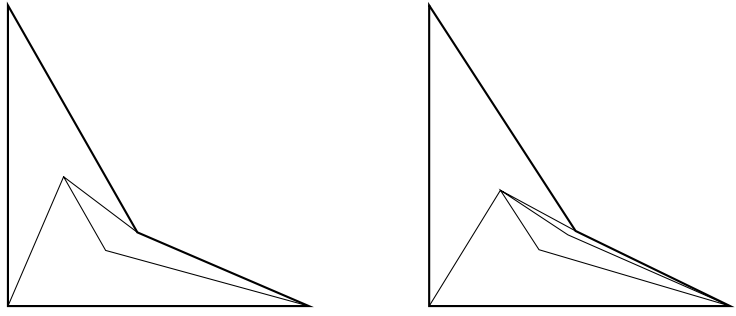
\end{proof}

\begin{cor}\label{cor:nonconvex}
For every integer $n \ge 3$, there is a non-convex quadrangle that is $n+k(n-1)$-self-affine for every $k=0,1,\ldots$
\end{cor}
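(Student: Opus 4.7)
The plan is to fix $n \ge 3$, take the non-convex $n$-self-affine quadrangle $Q=Q[x_0,x_0(1-x_0)]$ produced in the proof of Proposition~\ref{prop:nonconvex_positive}, and show by induction on $k$ that $Q$ admits a dissection into $n+k(n-1)$ affine images of itself. The key observation is the standard \emph{refinement trick}: whenever $Q$ is already $m$-self-affine and any one of the $m$ affine pieces is itself replaced by its own $n$-piece self-affine dissection, the total count increases from $m$ to $m-1+n=m+(n-1)$.

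More precisely, Proposition~\ref{prop:nonconvex_positive} yields affine maps $\alpha_1,\ldots,\alpha_n$ with $Q=\alpha_1(Q)\cup\cdots\cup\alpha_n(Q)$ and pairwise disjoint interiors $\operatorname{int}(\alpha_i(Q))$. This is the base case $k=0$. For the inductive step, suppose $Q=\beta_1(Q)\cup\cdots\cup\beta_m(Q)$ is a dissection with $m=n+(k-1)(n-1)$. Applying the affine map $\beta_1$ to the base dissection gives
\[
\beta_1(Q)=\beta_1\alpha_1(Q)\cup\cdots\cup\beta_1\alpha_n(Q),
\]
and since $\beta_1$ is an affine homeomorphism, it sends interiors to interiors and preserves disjointness, so the sets $\operatorname{int}(\beta_1\alpha_i(Q))$ are pairwise disjoint and all lie in $\operatorname{int}(\beta_1(Q))$. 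Replacing $\beta_1(Q)$ in the dissection of $Q$ by these $n$ pieces produces
\[
Q=\beta_1\alpha_1(Q)\cup\cdots\cup\beta_1\alpha_n(Q)\cup\beta_2(Q)\cup\cdots\cup\beta_m(Q),
\]
a collection of $m-1+n=n+k(n-1)$ affine copies of $Q$. Their interiors are pairwise disjoint because the $\beta_1\alpha_i(Q)$ sit inside $\beta_1(Q)$ whose interior is disjoint from the interiors of $\beta_2(Q),\ldots,\beta_m(Q)$ by hypothesis, and the interiors among the $\beta_1\alpha_i(Q)$ are disjoint by the previous sentence. This completes the induction.

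There is essentially no serious obstacle here: the argument is purely formal, the only thing to verify is that affine images of a dissection remain a dissection (i.e., union covers and interiors stay disjoint), which is immediate from $\beta_1$ being an affine bijection. The substance of the corollary lies entirely in Proposition~\ref{prop:nonconvex_positive}; the corollary itself is bookkeeping.
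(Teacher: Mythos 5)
Your argument is correct and is essentially identical to the paper's proof: both take the $n$-self-affine non-convex quadrangle from Proposition~\ref{prop:nonconvex_positive} and repeatedly replace one affine copy by its own $n$-piece dissection, gaining $n-1$ pieces per iteration. You merely spell out the bookkeeping (composition of affine maps, disjointness of interiors) that the paper leaves implicit.
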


\begin{proof}
Let $Q$ be $n$-self-affine. Since $Q$ has a dissection into $n$ affine copies of $Q$, we can split one of these copies again into $n$ affine copies of $Q$. This way $Q$ is dissected into $n+(n-1)$ affine copies. Iterating this procedure gives $n+k(n-1)$-self-affinities for all $k=0,1,\ldots$
\end{proof}

In Proposition~\ref{prop:nonconvex_positive} the case $n=2$ is excluded for the following reason.

\begin{prop}\label{prop:nonconvex_negative}
There is no $2$-self-affine non-convex quadrangle.
\end{prop}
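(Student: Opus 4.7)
My plan is to classify all $2$-dissections of a quadrangle $Q$ into two quadrangles combinatorially, then rule out each type under the assumption that $Q$ is non-convex. The shared boundary of such a dissection is a simple polygonal arc from $\partial Q$ to $\partial Q$, and counting vertices of the two pieces (four each) forces exactly two combinatorial types: Type (i), a straight glass-cut between interior points of two opposite sides of $Q$ with no interior vertex on the shared boundary; or Type (ii), a bent ``broken diagonal'' with exactly one interior vertex $p$, whose endpoints are two opposite vertices of $Q$.

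Type (i) is ruled out immediately: the straight cut introduces only convex angles at its endpoints on $\partial Q$, so the piece not containing the reflex vertex of $Q$ has only convex angles and is therefore convex, contradicting that it must be an affine image of the non-convex $Q$. For Type (ii), I parametrize $Q=Q[x,y]$ as in Section~\ref{sec:non-convex}, with $x,y>0$, $x+y<1$, and reflex vertex at $\VECT{x}{y}$. If the broken diagonal uses the vertex pair $\{\VECT{0}{1},\VECT{1}{0}\}$ (avoiding the reflex vertex), then the piece not containing $\VECT{x}{y}$ has its three non-reflex vertices equal to $Q$'s three non-reflex vertices $\VECT{0}{1},\VECT{0}{0},\VECT{1}{0}$. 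The affine map $\alpha_1$ sending $Q$ to this piece must permute those three points compatibly with the cyclic order while sending the reflex vertex of $Q$ to $p$; only two such maps exist, namely the identity (impossible, since the piece is strictly smaller than $Q$) and the coordinate swap $\VECT{\xi}{\eta}\mapsto\VECT{\eta}{\xi}$. The swap has $|\det|=1$, hence is area-preserving, again contradicting that the piece is a proper subset of $Q$.

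The remaining case is Type (ii) along the diagonal $\{\VECT{0}{0},\VECT{x}{y}\}$ through the reflex. Here the reflex angle is split between the two pieces, and a short angle-bookkeeping argument shows that one piece must acquire a new reflex at $p$ while the other keeps its reflex at $\VECT{x}{y}$. Enforcing reflex-to-reflex under each $\alpha_i$, together with cyclic-order preservation (up to reversal), leaves each of $\alpha_1,\alpha_2$ with only a handful of admissible vertex labellings; one of these turns out to force $p$ to coincide with a vertex of $Q$ and is thus invalid. The surviving labellings express $p$ as an explicit rational function of $(x,y)$ in at most two ways per piece; equating these expressions pairwise yields a compact polynomial system in $x,y$. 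I expect this final algebraic step to be the main obstacle, since the clean geometric arguments of the earlier cases no longer apply — one must carry out explicit eliminations and verify that the resulting polynomials have no real root in the non-convex parameter region $\{x,y>0,\ x+y<1\}$.
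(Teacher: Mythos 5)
Your overall plan mirrors the paper's: reduce to a combinatorial classification of the separating arc, then kill each case. Your Type~(i) argument and your treatment of the bent cut along the outer diagonal $\{\VECT{0}{1},\VECT{1}{0}\}$ are correct; in the latter case the observation that the affine map must fix $\VECT{0}{0}$ and either fix or swap $\VECT{0}{1},\VECT{1}{0}$ (hence be the identity or an area-preserving map, both impossible for a proper sub-tile) is a clean alternative to the paper's argument. Two remarks, one minor and one serious. Minor: your vertex count does not quite force the bent cut's endpoints to be two opposite vertices — it also permits one endpoint to be the reflex vertex of $Q$ contributing a straight angle to one piece while the other endpoint is an edge-interior point; this configuration must be excluded separately (as the paper does) by noting that then neither piece has a reflex angle at that vertex, so $Q_1$ and $Q_2$ would have at most one reflex vertex between them.

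The serious issue is the final case, the bent cut along the inner diagonal $\{\VECT{0}{0},\VECT{x}{y}\}$, which you explicitly leave as an unexecuted elimination ("I expect this final algebraic step to be the main obstacle"). As written, the proof is incomplete precisely there, and it is not evident a priori that the resulting polynomial systems have no roots in $\{x,y>0,\ x+y<1\}$. The paper closes this case with no algebra at all (Lemma~\ref{lem:diagonals}): your own reflex bookkeeping shows one piece keeps its reflex vertex at $\VECT{x}{y}$ with $\VECT{0}{0}$ as the opposite vertex, so that piece has the \emph{same inner diagonal} as $Q$. The quantity $I(Q)=|v_1v_3|/|v_1d|$ — where $v_1v_3$ is the inner diagonal and $d$ is the point where its supporting line meets the outer diagonal — is an affine invariant, and for two non-convex quadrangles of the same affine type with $Q_0\subseteq Q$ sharing the inner diagonal one has $I(Q_0)\ge I(Q)$ with equality only if $Q_0=Q$; this forces the tile to equal $Q$, a contradiction. (The same invariant, with the inequality reversed, also disposes of the outer-diagonal case.) Replacing your deferred elimination with this monotone-invariant argument would complete the proof.
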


Again we prepare the proof by a lemma.

\begin{lem}\label{lem:diagonals}
Let $Q_0$ and $Q$ be non-convex quadrangles of the same affine type such that $Q_0 \subseteq Q$. If $Q_0$ has the same inner or outer diagonal as $Q$ then $Q_0=Q$.

In particular, if $Q_0$ is a tile in a dissection of $Q$ representing an $n$-self-affinity of $Q$, $n \ge 2$, then neither the inner nor the outer diagonal of $Q_0$ coincides with the respective diagonal of $Q$. 
\end{lem}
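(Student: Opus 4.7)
The plan is to exploit the rigidity imposed by fixing two points with an affine map, together with the strict geometric constraint imposed by the reflex vertex. I focus on the inner-diagonal case; the outer-diagonal case is analogous.

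Since $Q_0$ and $Q$ have the same affine type (with only the relabeling $Q[x,y]=Q[y,x]$ available in the non-convex case), there is an affine bijection $\alpha\colon Q\to Q_0$ that carries corresponding vertices to corresponding vertices. The endpoints of the inner diagonal are intrinsic (reflex vertex and its opposite vertex). Hence a shared inner diagonal forces $\alpha$ either to fix these two endpoints individually or to swap them. I shall treat the generic subcase where $\alpha$ fixes both endpoints; the swap and orientation-reversing subcases will be addressed at the end.

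Choose coordinates so that the shared diagonal is $[v_1,v_3]$ on the $x$-axis, with $v_1=(0,0)$, $v_3=(c,0)$, $v_2$ in the upper half-plane and $v_4$ in the lower. Any affine map fixing the whole $x$-axis pointwise has the form
\[
\alpha(X,Y)=(X+aY,\,bY),\qquad a\in\mathbb R,\ b\neq 0.
\]
The inner diagonal splits $Q$ into triangles $T_+ = v_1v_2v_3$ and $T_- = v_1v_3v_4$ lying on opposite sides of the $x$-axis, and $Q_0$ is split analogously by the same segment. In the orientation-preserving case ($b>0$), the containment $Q_0\subseteq Q$ forces $w_2:=\alpha(v_2)\in T_+$ and $w_4:=\alpha(v_4)\in T_-$, because each side of the diagonal contributes the unique portion of $Q$ on that side. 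Writing $w_i = b v_i + (1-b) p_i$, with $p_i\in\mathbb R\times\{0\}$, the vertex containments reduce to $p_2,p_4\in [v_1,v_3]$, i.e. (setting $A=a/(1-b)$ for $b\neq 1$) to two interval constraints on $A$ determined by the coordinates of $v_2$, $v_3$ and $v_4$.

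The crucial step is that the reflex condition at $v_3$ — namely that $v_3$ lies strictly inside the triangle $v_1v_2v_4$ — translates, after dividing by the positive quantity $Y_2\cdot|Y_4|$, into the strict inequality $(c-X_2)/Y_2 + (c-X_4)/|Y_4| < 0$. But the two intervals forced by $p_2\in[v_1,v_3]$ and $p_4\in[v_1,v_3]$ require the negation of precisely this inequality to overlap. Hence they are disjoint for every $b\neq 1$, and the boundary case $b=1$ collapses them to the single point $a=0$. Thus $\alpha=\mathrm{id}$ and $Q_0=Q$.

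For the remaining subcases the same apparatus applies: if $\alpha$ swaps $v_1$ and $v_3$ it is still affine and still constrained by the reflex inequality applied to $Q_0$'s own reflex vertex (which still lies on the same segment); if $\alpha$ is orientation-reversing one instead places $w_2\in T_-$ and $w_4\in T_+$, and the barycentric bookkeeping yields the same strict obstruction. For the outer-diagonal case, $\alpha$ fixes the segment $v_2v_4$, and one represents $Q$ and $Q_0$ each as an envelope triangle minus its notch triangle; then $Q_0\subseteq Q$ requires simultaneously that the envelope of $Q_0$ fit inside that of $Q$ (pushing $b\le 1$) and that the notch of $Q_0$ cover the intersection of the envelope of $Q_0$ with the notch of $Q$ (pushing $b\ge 1$). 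These force $b=1$ and then $a=0$. The second statement then follows immediately, since in any $n$-self-affine dissection with $n\ge 2$ each tile occupies strictly less than the whole of $Q$, so the first part rules out the coincidence of either diagonal.

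The main obstacle will be the uniformity of the case split: the four combinations of (inner vs.\ outer diagonal) and (matching vs.\ swapping endpoints), multiplied by the two orientations of $\alpha$, all reduce to the same strict-inequality obstruction, but each requires its own careful sign-tracking in the barycentric computation.
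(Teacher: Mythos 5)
Your core computation for the inner-diagonal case with $\alpha$ orientation-preserving and fixing both endpoints is correct, and the identification of the reflex condition at $v_3$ with the inequality $(c-X_2)/Y_2+(c-X_4)/|Y_4|<0$, which says exactly that the first $A$-interval lies strictly to the left of the second, is a nice observation. But the proposal as it stands has genuine gaps. First, the reduction of $w_2\in T_+$, $w_4\in T_-$ to $p_2,p_4\in[v_1,v_3]$ is only valid for $0<b<1$ (there the weights $b$ and $1-b$ are the barycentric coordinates); for $b>1$ that case must instead be excluded by the height bound $bY_2>Y_2$, and for $b<0$ the constraints involve $a$ and $b$ jointly and do not collapse to two intervals in the single variable $A$ -- so ``disjoint for every $b\neq 1$'' is not what your computation establishes, and the orientation-reversing subcase is a separate, unexecuted calculation. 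Second, and more seriously, the outer-diagonal case is only asserted: the claim that ``the notch of $Q_0$ must cover $\operatorname{env}(Q_0)\cap\operatorname{notch}(Q)$'' forces $b\ge 1$ requires comparing three cross-sections near the segment $v_2v_4$ whose endpoints depend on the signs of $X_3$ and $X_3-c$ and on the shear parameter $a$, and nothing written rules out a compensating choice of $a$. (In partial compensation, your endpoint-swap subcase for the inner diagonal is actually immediate and needs no computation: it would put the reflex vertex of $Q_0$ at the convex vertex $v_1$ of $Q$, which is impossible because the angle of $Q_0$ at a shared vertex cannot exceed that of $Q$.)

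For comparison, the paper disposes of all cases at once with a single affine invariant: letting $d$ be the intersection of the outer diagonal $v_2v_4$ with the line spanned by the inner diagonal $v_1v_3$, the ratio $I(Q)=|v_1v_3|/|v_1d|$ is preserved by affine maps, so $I(Q_0)=I(Q)$; if the inner diagonal is shared, $Q_0\subseteq Q$ forces $I(Q_0)\ge I(Q)$ with equality only for $Q_0=Q$, and if the outer diagonal is shared it forces the reverse inequality. This treats both diagonals, both orientations and the endpoint swap uniformly and replaces your eightfold case split by one monotonicity observation. If you want to salvage your coordinate approach you must at least write out the outer-diagonal and $b\notin(0,1)$ computations in full; but the invariant-ratio argument is the cleaner route.
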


\begin{proof}
For two different points $a,b \in \mathbb R^2$, let $ab$ denote the line segment with endpoints $a$ and $b$, and let $\AFF(a,b)$ stand for the straight line through $a$ and $b$.

Let the vertices of $Q$ be denoted by $v_1,v_2,v_3,v_4$ such that $v_2v_4$ is the outer diagonal and $v_3$ is the non-convex vertex. Let $d$ be the common point of $v_2v_4$ with $\AFF(v_1,v_3)$. Then the ratio of lengths $I(Q)=\frac{|v_1v_3|}{|v_3d|}$ is invariant under affine transformations $\alpha$, as they preserve ratios of lengths of parallel segments. That is, $I(\alpha(Q))=I(Q)$. In particular, $I(Q_0)=I(Q)$.

If $Q_0$ with $Q_0 \subseteq Q$ has the same inner diagonal $v_1v_3$ as $Q$, its outer diagonal is $v_{2,0} v_{4,0}$ with, say, $v_{2,0}$ in the triangle 
$\triangle v_1 v_2 v_3$ and hence $v_{4,0}$ in $\triangle v_1 v_3 v_4$. Then the common point $d_0$ of $v_{2,0} v_{4,0}$ and $\AFF(v_1,v_3)$ satisfies $d_0 \in v_3 d \setminus \{v_3\}$ with $d_0 = d$ if and only if $v_{2,0}v_{4,0}=v_2v_4$; i.e., if and only if $Q_0=Q$. Consequently, $I(Q_0)=\frac{|v_1v_3|}{|v_3d_0|} \ge \frac{|v_1v_3|}{|v_3d|}=I(Q)$ with equality if and only if $d_0=d$, whence $I(Q_0)=I(Q)$ gives $Q_0=Q$.

Now let $Q_0$ and $Q$ share their outer diagonal $v_2v_4$, and let $v_{1,0}$ and $v_{3,0}$ be the additional vertices of $Q_0$, $v_{3,0}$ being the non-convex one. The equality $I(Q_0)=I(Q)$ and the intercept theorem yield 
\begin{equation}\label{eq:ratio}
\frac{\DIST(v_{1,0}, \AFF(v_2,v_4))}{\DIST(v_{3,0}, \AFF(v_2,v_4))}= \frac{\DIST(v_1, \AFF(v_2,v_4))}{\DIST(v_3, \AFF(v_2,v_4))}.
\end{equation}
The inclusion $Q_0 \subseteq Q$ shows that $v_{1,0}$ and $v_{3,0}$ are within the convex quadrangle $v_1 d_2 v_3 d_4$, $d_2$ and $d_4$ being the intersections of $v_1v_2$ with $\AFF(v_3,v_4)$ and of $v_1v_4$ with $\AFF(v_2,v_3)$, respectively. In particular,
$$
\frac{\DIST(v_{1,0}, \AFF(v_2,v_4))}{\DIST(v_{3,0}, \AFF(v_2,v_4))}\le \frac{\DIST(v_1, \AFF(v_2,v_4))}{\DIST(v_{3,0}, \AFF(v_2,v_4))}
\le \frac{\DIST(v_1, \AFF(v_2,v_4))}{\DIST(v_3, \AFF(v_2,v_4))}$$
with equality if and only if $v_{1,0}=v_1$ and $v_{3,0}=v_3$. So \eqref{eq:ratio} implies $Q_0=Q$.
\end{proof}

\begin{proof}[Proof of Proposition~\ref{prop:nonconvex_negative}]
Assume that a non-convex quadrangle $Q$ is dissected into two affine copies $Q_1$ and $Q_2$ of $Q$. Then $\Gamma=Q_1 \cap Q_2$ is a polygonal arc connecting two different points $p_j$, $j=1,2$, of the boundary of $Q$. The arc $\Gamma$ is not a line segment, because both $Q_1$ and $Q_2$ are non-convex. Let $i \ge 1$ be the number of its inner vertices.

We count the total number of vertices of $Q_1$ and $Q_2$ with multiplicities; i.e., a common vertex of both $Q_1$ and $Q_2$ is considered as a double vertex. Every vertex of $Q$ is at least a single vertex of $Q_1$ and $Q_2$. If some $p_j$ is a convex vertex of $Q$, we win a new vertex in so far as it becomes a double vertex. If $p_j$ is the non-convex vertex of $Q$ then we may win a new vertex, but do not need, because a straight angle could appear. If $p_j$ is not a vertex of $Q$, $p_j$ is a double new vertex. Moreover, every inner vertex of $\Gamma$ gives two new vertices of $Q_1$ and $Q_2$. So the total number $8$ of vertices of $Q_1$ and $Q_2$ is $4$ (vertices of $Q$) plus $2i$ (inner vertices of $\Gamma$) plus the vertices won in $p_1$ and $p_2$. As the last summand is at least one, we get $i=1$ and exactly two new vertices are won in $p_1$ and $p_2$.

It is impossible to win no new vertex in $p_1$ and two in $p_2$, because then neither of $Q_1$ and $Q_2$ had a non-convex vertex in the non-convex vertex $p_1$ of $Q$, and the only non-convex vertex of $Q_1$ and $Q_2$ would appear in the inner vertex of $\Gamma$, so that $Q_1$ and $Q_2$ had only one non-convex vertex in total.

Hence both $p_1$ and $p_2$ are vertices of $Q$ and both are vertices of both of $Q_1$ and $Q_2$. Moreover, $p_1$ and $p_2$ are opposite vertices of $Q$, because both $Q_1$ and $Q_2$ are quadrangles. But then $p_1$ and $p_2$ are either the endpoints of the outer diagonal of $Q$, whence both $Q_1$ and $Q_2$ have the same outer diagonal as $Q$, or $p_1$ and $p_2$ are the non-convex vertex of $Q$ and its opposite vertex, whence one of $Q_1$ and $Q_2$ shares the inner diagonal with $Q$. However, both situations contradict Lemma~\ref{lem:diagonals}.
\end{proof}

The previous results give a first access to the non-convex case, but leave many questions open. Is every non-convex quadrangle self-affine? Is there some non-convex quadrangle $Q$ and an integer $n_0=n_0(Q)$ such that $Q$ is $n$-self-affine for every integer $n \ge n_0$? Does the last apply to every non-convex $Q$? Does the last apply to every non-convex $Q$ with $n_0$ not depending on $Q$, as it does for convex $Q$ with $n_0=5$ \cite[Theorem~2]{richter2024+}?




\begin{thebibliography}{00}

\bibitem{blechschmidt_richter2015}
K.\ Blechschmidt, C.\ Richter: \emph{Non-self-affine convex pentagons.} Beitr.\ Algebra Geom.~{\bf 56}, no. 1, 263--267 (2015).

\bibitem{boerner2020}
W.\ B\"orner: \emph{Affine Abbildungen in der Ebene.} Die Wurzel. Zeitschrift für Mathematik.~{\bf 54}, November 2020, 233, 248--255 (2020).

\bibitem{ditrich1995}
F.\ Ditrich: \emph{Elementar $k$-selbst\"ahnliche Zerlegungen einfacher Polygone.} Diploma thesis.
Friedrich Schiller University, Jena, 1995.

\bibitem{edgar1990}
G.A.\ Edgar: \emph{Measure, topology, and fractal geometry.}
Undergraduate Texts in Mathematics.
Springer-Verlag, New York, 1990.

\bibitem{falconer1997}
K.\ Falconer: \emph{Fractal geometry. Mathematical foundations and applications.} John Wiley \& Sons, Ltd., Chichester, 1997.

\bibitem{golomb1964} 
S.W.\ Golomb: \emph{Replicating figures
in the plane.} Math.\ Gaz.~{\bf 48}, 403--412 (1964).

\bibitem{gruenbaum_shephard1987}
B.\ Gr\"unbaum, G.C.\ Shephard: \emph{Tilings and patterns.}
W.H. Freeman and Company, New York, 1987.

\bibitem{hertel2000}
E.\ Hertel: \emph{Zur Affingeometrie konvexer Polygone.} Jenaer Schriften zur Mathematik
und Informatik, Math/Inf/00/22, 26 pp. (2000), preprint, \url{https://www.minet.uni-jena.de//Math-Net/reports/shadows/00-09report.html} 

\bibitem{hertel_richter2010}
E.\ Hertel, C.\ Richter: \emph{Self-affine convex polygons.} J.\ Geom.~{\bf 98}, no. 1--2, 79--89 (2010).

\bibitem{mandelbrot1982}
B.B.\ Mandelbrot: \emph{The fractal geometry of nature.}
W.H. Freeman and Co., San Francisco, CA, 1982.

\bibitem{osburg2004}
I.\ Osburg: \emph{Selbst\"ahnliche Polyeder.} Doctoral thesis. Friedrich Schiller University, Jena, 2004,
\url{https://nbn-resolving.org/urn:nbn:de:gbv:27-dbt-003580-1}

\bibitem{richter2010}
C.\ Richter: \emph{Examples of $3$-self-affine convex quadrangles.} Jenaer Schriften zur Mathematik
und Informatik, Math/Inf/10/04, 5 pp. (2010), preprint, \url{https://www.minet.uni-jena.de//Math-Net/reports/shadows/10-04report.html}

\bibitem{richter2012}
C.\ Richter: \emph{Self-affine convex discs are polygons.} Beitr. Algebra Geom.~{\bf 53}, no. 1, 219--224 (2012).

\bibitem{richter2024+}
C.\ Richter: \emph{Self-affinity of discs under glass-cut dissections.} (submitted) \url{https://arxiv.org/abs/2404.11460}

\bibitem{scherer1987}
K.\ Scherer: \emph{A puzzling journey to the reptiles and related animals.} Privately published,
Auckland, New Zealand, 1987.

\bibitem{zimmermann2023}
F.\ Zimmermann: \emph{Ausgew\"ahlte Aspekte der Selbstaffinit\"at von Scheiben.} Thesis submitted as part of the preliminary board exam for teachers. Friedrich Schiller University, Jena, 2023.

\bibitem{maple}
File of computations made by Maple for Section~\ref{sec:non-glass-cut}. \url{https://arxiv.org/abs/2404.11460}
\end{thebibliography}
\end{document}